\newtheorem{thm}{Theorem}[section]
\newtheorem*{thmA}{Theorem A}
\newtheorem*{thmB}{Theorem B}
\newtheorem{lem}[thm]{Lemma}
\newtheorem{defn}[thm]{Definition}
\newtheorem{lemma}[thm]{Lemma}
\newtheorem{prop}[thm]{Proposition}
\theoremstyle{remark}
\newtheorem{rem}[thm]{Remark}
\newtheorem*{ack}{Acknowledgements}
\DeclareMathOperator{\SL}{SL}
\DeclareMathOperator{\Lip}{Lip}
\DeclareMathOperator{\diam}{diam}
\DeclareMathOperator{\closure}{closure}
\DeclareMathOperator{\vol}{vol}
\DeclareMathOperator{\rank}{rank}
\DeclareMathOperator{\Ad}{Ad}
\newcommand{\from}{\colon}
\newcommand{\dimAN}{\dim_{\text{AN}}}
\newcommand{\R}{\ensuremath{\mathbb{R}}}
\DeclareMathOperator{\inter}{int}
\newcommand{\cSc}{\ensuremath{\mathcal{S}}}
\newcommand{\cSr}{\ensuremath{\Sigma}}
\newcommand{\fa}{\ensuremath{\mathfrak{a}}}
\newcommand{\fc}{\ensuremath{\mathfrak{c}}}
\newcommand{\fd}{\ensuremath{\mathfrak{d}}}
\newcommand{\fe}{\ensuremath{\mathfrak{e}}}
\newcommand{\fg}{\ensuremath{\mathfrak{g}}}
\newcommand{\fn}{\ensuremath{\mathfrak{n}}}
\newcommand{\xop}{{X_\infty^0}}
\newcommand{\V}{\ensuremath{\mathcal{V}}}
\begin{document}

\title{The distortion dimension of $\mathbb Q$--rank $1$ lattices}
\author{Enrico Leuzinger \&  Robert Young}
\date{\today}
\maketitle

\begin{abstract} Let $X=G/K$ be a symmetric space of
  noncompact type and rank $k\ge 2$. We prove that horospheres in $X$ are
  Lipschitz $(k-2)$--connected if their centers are not contained in a
  proper join factor of the spherical building of $X$ at infinity. As
  a consequence, the distortion dimension of an irreducible
  $\mathbb{Q}$--rank-$1$ lattice $\Gamma$ in a linear, semisimple Lie
  group $G$ of $\mathbb R$--rank $k$ is $k-1$.  That is, given
  $m< k-1$, a Lipschitz $m$--sphere $S$ in (a polyhedral complex
  quasi-isometric to) $\Gamma$, and a $(m+1)$--ball $B$ in $X$ (or
  $G$) filling $S$, there is a $(m+1)$--ball $B'$ in $\Gamma$ filling
  $S$ such that $\vol B'\sim \vol B$.  In particular, such arithmetic
  lattices satisfy Euclidean isoperimetric inequalities up to
  dimension $k-1$.
\end{abstract}

\vspace{6ex}

\noindent{\it Key words}: Lipschitz connectivity, subgroup distortion,
horospheres, symmetric spaces, arithmetic groups, Dehn functions

\noindent{\it 2000 MSC}: 22E40, 53C35

\maketitle


\section{Introduction and main results}

\vspace{2ex}

Let $G$ be a Lie group   equipped with a left invariant metric and $\Gamma\subset G$ a finitely generated discrete subgroup
equipped with a word metric. If $\Gamma$ is cocompact, then 
$\Gamma$ is quasi-isometric to $G$ and thus both have the same large-scale geometry. If $\Gamma$ is not cocompact, the large-scale geometric
 properties of $G$ and $\Gamma$ can be very 
different. For instance, $\Gamma=\SL(2,\mathbb Z)$ is exponentially
distorted in $\SL(2,\mathbb R)$, see e.g.\ \cite[Ch.\ 3]{Gro}.

Consider now a group $G$ of the form $G=\prod_{i=1}^m G_i(k_i)$, where
the $k_i$ are locally compact, non-discrete fields and the $G_i$ are
connected, absolutely almost simple algebraic groups defined over
$k_i$.  Let $\Gamma$ be an irreducible lattice in $G$.  In a
remarkable paper \cite{LMR}, Lubotzky, Mozes and Raghunathan  proved
that if the total rank $k$ of $G$, i.e.\ 
$k=\sum_{i=1}^m k_i\textup{-rank} (G_i)$, is at least $2$, then
$\Gamma$ is undistorted in $G$.  That is, the word metric $d_{\Gamma}$
on $\Gamma$ is Lipschitz equivalent to the restriction of a left
invariant metric $d_G$ on $G$ to $\Gamma$.

In \cite{BW},  Bux and Wortman conjectured a far reaching generalization of this result. In order to formulate it,
let $X$ be the product of irreducible symmetric spaces and Euclidean buildings on which $\Gamma$ acts. The total rank of $G$
is then equal to the maximal dimension of an isometrically embedded
Euclidean space in $X$, which we call the \emph{geometric rank} of $X$
and denote by $\textup{geo-rank}(X)$.
For some point $x\in X$ and a real number $r$ define the the following thickening of the orbit $\Gamma\cdot x$ in $X$
$$
X(r):=\{y\in X\mid d(y,\Gamma\cdot x)\leq r\}.
$$
Note that by the Milnor--Švarc lemma, the induced inner metric on $X(r)$ is quasi-isometric to $(\Gamma, d_{\Gamma})$. Following \cite{BW} we define $\Gamma$ as being \emph{undistorted up to dimension $m$} if:
given any $r\geq 0$, there exist real numbers $r'\geq r,\lambda\geq 1$, and $C\geq 0$ such that for any $k<m$ and any Lipschitz
$k$--sphere $S\subset X(r)$, there is a Lipschitz $(k+1)$--ball $B_{\Gamma}\subset  X(r')$ with 
$\partial B_{\Gamma}=S$ and 
$$
\textup{volume} (B_{\Gamma})\leq \lambda \ \textup{volume} (B_X) + C
$$
for all Lipschitz $(k+1)$--balls $B_X\subset X$ with $\partial B_X=S$. The \emph{distortion dimension of $\Gamma$} is then defined 
as 
$$\textup{dis-dim}(\Gamma)=\max\{m\mid \Gamma \  \textup{is undistorted up to dimension}\ m\}.
$$
The conjecture of Bux and Wortman posits that $\textup{dis-dim}(\Gamma)=\textup{geo-rank}(X)-1$.
See  \cite{BEW}, \cite{Yo} for recent progress on this conjecture.

The chief goal of the present paper is to prove the
Bux--Wortman conjecture for $\mathbb{Q}\textup{--rank}\ 1$ arithmetic groups  in
linear, semisimple groups defined over number fields, i.e.\ finite extensions
of $\mathbb Q$. For such lattices the space $X$ above is a
symmetric space  of noncompact type; that is,  there are no building
factors.  In our proof, it will be convenient to replace the subset $X(r)$ by the complement of a countable union
 of horoballs in $X$
(see \cite{Le1}, Thm. 3.6).  Like $X(r)$, this is quasi-isometric
to  $(\Gamma, d_{\Gamma})$.  A crucial fact is that for $\mathbb{Q}\textup{--rank}\ 1$
lattices these horospheres  are \emph{disjoint}.  

\begin{thmA}[Distortion dimension]
  The distortion dimension of an irreducible
  $\mathbb{Q}\textup{--rank}$ $1$ lattice in a linear, semisimple Lie
  group of $\mathbb R$--\textup{rank} $k$ is $k-1$.  If $k\ge 2$, then
  such an arithmetic lattice satisfies Euclidean isoperimetric
  inequalities up to dimension $k-1$ and an exponential isoperimetric
  inequality in dimension $k$.

  That is, there is a $(k-2)$--connected complex $Y$ that is
  equivariantly quasi-isometric to $\Gamma$ such that for any
  $m\le k-2$ Lipschitz $m$--sphere $S\subset Y$, there is a Lipschitz
  $m$--ball $B\subset Y$ such that $\partial B=S$ and
  $$\vol B\lesssim (\vol S)^{m+1/m}.$$

  Conversely, for $r>1$, there is a Lipschitz sphere
  $S\from S^{k-1}\to Y$ such that $\vol S\sim r^{k-1}$ but
  $\vol B\gtrsim e^r$ for any Lipschitz $k$--ball $B\subset Y$ such
  that $\partial B=S$.
\end{thmA}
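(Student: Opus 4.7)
The plan is to model $\Gamma$ geometrically by a neutered symmetric space.  Because $\Gamma$ has $\mathbb Q$-rank $1$, the $\Gamma$-orbit $\{\mathcal B_i\}_{i\in I}$ of horoballs centered at the $\mathbb Q$-parabolic fixed points in $\pinfty X$ can be chosen pairwise disjoint; as noted in the introduction, the neutered space $\xop:= X\setminus\bigsqcup_i \inter(\mathcal B_i)$, with its induced length metric, is $\Gamma$-equivariantly quasi-isometric to $(\Gamma,d_\Gamma)$, and we take $Y$ to be a $\Gamma$-invariant polyhedral thickening of $\xop$.  Since $\Gamma$ is irreducible, each $\mathbb Q$-parabolic fixed point $\xi_i\in\pinfty X$ fails to lie in any proper join factor of the spherical building at infinity, and the horosphere connectivity theorem stated in the abstract therefore gives that every horosphere $\partial\mathcal B_i$ is Lipschitz $(k-2)$-connected with uniform constants.

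For the Euclidean filling bound, fix $m\le k-2$ and a Lipschitz $m$-sphere $S\subset Y$.  I would start from a Euclidean filling $B\subset X$ whose existence with $\vol B\lesssim (\vol S)^{(m+1)/m}$ is guaranteed by the higher-rank Euclidean filling theorems for symmetric spaces.  The filling may pass through horoballs, but by disjointness the intersections $C_i:=B\cap\partial\mathcal B_i$ are non-interacting Lipschitz $m$-cycles in the respective horospheres.  Using the uniform Lipschitz $(k-2)$-connectivity, fill each $C_i$ by an $(m+1)$-chain $D_i\subset\partial\mathcal B_i$ satisfying a Euclidean volume bound, and replace $B\cap\mathcal B_i$ by $D_i$.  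A coarea estimate controls $\sum_i\vol D_i$ in terms of $\vol B$, and a nearest-point projection into $Y$ yields $B'\subset Y$ with $\partial B'=S$ and $\vol B'\lesssim (\vol S)^{(m+1)/m}$.

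In dimension $k$ the same scheme gives the exponential upper bound.  A $(k-1)$-sphere $S$ of diameter $\lesssim r$ in $X$ bounds a filling $B\subset X$ whose caps $B\cap\mathcal B_i$ penetrate horoballs to depth $\lesssim r$.  Pushing each cap radially outward along the horocyclic flow to $\partial\mathcal B_i$ incurs an exponential Jacobian factor of order $e^r$, giving $\vol B'\lesssim e^r$.  For the matching lower bound, take the image in $Y$ of a large round $(k-1)$-sphere $S_r$ on a horosphere $\partial\mathcal B$ at depth $\sim r$; its volume is $\sim r^{k-1}$, and any filling must either climb through an exponentially thin cross-section of $\partial\mathcal B$ or reroute through neighboring horoballs, so the standard horospherical filling obstruction forces $\vol B\gtrsim e^r$.

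The main obstacle is the quantitative replacement step.  The horosphere connectivity theorem in the form stated only gives Lipschitz fillability, whereas the argument above needs a Euclidean volume bound $\vol D_i\lesssim(\vol C_i)^{(m+1)/m}$ with constants uniform across all $\Gamma$-translates of horoballs.  Achieving this requires going into the explicit filling construction of the connectivity theorem and exploiting the homogeneity of each horosphere under its solvable stabilizer, then matching the resulting constants across the different $\Gamma$-orbits of horospheres and absorbing the cost of the final projection into the polyhedral model $Y$.  This bookkeeping will form the bulk of the technical work.
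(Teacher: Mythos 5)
Your geometric framing matches the paper: $\Gamma$ is modeled by the neutered space $Y=X\setminus\bigsqcup\inter(\mathcal B_i)$ with disjoint horoballs (from \cite{Le1}), irreducibility forces the horoball centers off proper join factors, and Theorem~B then gives uniform Lipschitz $(k-2)$--connectivity of the horospheres. But from that point on your route diverges from the paper's, and the place where you say ``this bookkeeping will form the bulk of the technical work'' is in fact a real gap, not bookkeeping. Lipschitz $(k-2)$--connectivity controls Lipschitz constants, not volumes; passing from it to the Euclidean volume bound $\vol D_i\lesssim(\vol C_i)^{(m+1)/m}$ that your cap-replacement scheme requires, uniformly over all horospheres and summable via a coarea estimate, is precisely the content you would have to reprove. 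The paper does not attempt this directly. Instead it invokes Proposition~\ref{prop:LipUndistorted} (Young, \cite[Thm.~1.3]{Yo}): $X$ is Lipschitz $(k-2)$--connected and has finite Assouad--Nagata dimension, and the boundary components of $X\smallsetminus Y$ are the horospheres $\partial B_i$, which are uniformly Lipschitz $(k-2)$--connected by Theorem~B; hence $Y$ is undistorted up to dimension $k-1$. Non-distortion then transports the Euclidean isoperimetric inequalities of the ambient symmetric space (\cite{Le2}) into $Y$ for free, with no cap-by-cap surgery. Note that your sketch never mentions Assouad--Nagata dimension, which is an essential hypothesis of that proposition.

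The lower bound is also handled differently. You propose a large round sphere on a horosphere and appeal to a ``standard horospherical filling obstruction,'' with a heuristic about exponentially thin cross-sections and rerouting through neighboring horoballs; this is not a theorem you can cite. The paper instead produces (Proposition~\ref{prop:lowerBoundsHorospheres}) a sphere $\alpha=F\cap Z$ where $F$ is a maximal flat opposite to $\fc$ meeting $Z$ at height $r$, shows $\Lip(\alpha)\sim r$, and observes that any filling of $\alpha$ inside $Z$ (or, after nearest-point projection, inside $Y$) avoids the ball $B_{r/2}(x)$ in $F$; Leuzinger's $\frac12$--avoidant divergence estimate for flats \cite{LeuzingerCorank} then forces volume $\gtrsim e^{cr}$. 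To complete the argument in $Y$ the paper uses that nearest-point projection onto $\partial B_i$ is $1$--Lipschitz by convexity, a step you should make explicit. In short, your overall architecture is right, but the two quantitative engines the paper runs on --- Young's Lipschitz-connectivity-to-undistortion theorem and Leuzinger's avoidant divergence bound --- are exactly the pieces your sketch leaves as gaps.
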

Gromov showed that any nonuniform lattice $\Gamma$ in a semisimple
group $G$ of $\R$--rank 1 is exponentially distorted and thus has
distortion dimension 0 (see \cite[3.G]{Gro}).

In many cases, the lower bound in Theorem A follows from work of
Wortman \cite{Wo}, who proved that arithmetic subgroups of semisimple
groups of relative $\mathbb Q$--type $A_n, B_n, C_n, D_n, E_6$ or
$ E_7$ are exponentially distorted in dimension
$\textup{geo-rank}(X)$.

By work of Young \cite{Yo}, 
non-distortion for subsets of spaces with finite Assouad--Nagata
dimension is a consequence of Lipschitz connectivity.  Recall that $Z$ is Lipschitz $n$--connected if for all
$d\le n$ and any Lipschitz map $\alpha\from S^d\to Z$, there is an
extension $\beta\from D^{d+1}\to Z$ such that
$\Lip(\beta)\lesssim\Lip(\alpha)$.  Theorem~1.3 of \cite{Yo} (see also \cite{Yo1}) states
the following:
\begin{prop}[Distortion and connectivity, {\cite[1.3]{Yo}}]\label{prop:LipUndistorted}
  Let $X$ be a metric space and let $Z\subset X$ be a nonempty
  closed subset with inner metric induced by the metric
  of $X$.  Suppose in addition that $X$ is a geodesic metric space
  such that the Assouad--Nagata dimension $\dimAN(X)$ of $X$ is
  finite and one of the following is true:
  \begin{itemize}
  \item $Z$ is Lipschitz $n$--connected.
  \item $X$ is Lipschitz $n$--connected, and if $X_p,p\in P$ are the connected components of
    $X\smallsetminus Z$, then the sets $H_p=\partial X_p$ are
    Lipschitz $n$--connected with uniformly bounded
    implicit constant.
  \end{itemize}
  Then $Z$ is undistorted up to dimension $n+1$.  
\end{prop}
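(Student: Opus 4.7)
The strategy I would follow is to approximate $B_X$ by a simplicial complex of bounded local geometry whose number of top cells is controlled by $\vol(B_X)$, and then convert this approximation into a filling inside $Z$ (or a controlled thickening of it) one cell at a time, using Lipschitz connectivity.

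First I would use the finite Assouad--Nagata dimension of $X$ to construct a simplicial approximation of $B_X$. For every scale, $X$ admits a cover by sets of diameter $O(1)$ with bounded multiplicity; taking the nerve of such a cover pulled back along $B_X$ yields a simplicial complex $K$ of uniformly bounded local geometry together with a Lipschitz map $f\from K\to X$ approximating $B_X$, with $f|_{\partial K}=S$ and with the number of top-dimensional simplices $\lesssim \vol(B_X)+\vol(S)$. After replacing $Z$ by a controlled thickening $Z(r')$, one can further arrange that the $0$-skeleton of $K$ maps into $Z(r')$ under $f$.

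In the first case, I would then construct the desired filling $B_Z\from K\to Z(r')$ inductively up the skeleta of $K$, starting from $B_Z|_{\partial K}=S$ together with the modified $0$-skeleton values. Assuming $B_Z$ has been defined on the $i$-skeleton with uniformly bounded Lipschitz constant, for each $(i+1)$-cell $\sigma$ with $i\le n$ the map $B_Z|_{\partial\sigma}$ is an $O(1)$-Lipschitz $i$-sphere in $Z(r')$; Lipschitz $n$-connectivity of $Z$ then supplies an $O(1)$-Lipschitz, hence $O(1)$-volume, extension to $\sigma$. Summing over the $\lesssim \vol(B_X)$ top-dimensional cells yields the linear volume bound required for non-distortion up to dimension $n+1$. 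In the second case, where $Z$ need not be Lipschitz connected but $X$ is and the $H_p$ are uniformly so, I would refine $K$ further so that each cell is either contained in $Z(r')$ or entirely inside a single component $X_p$ of $X\smallsetminus Z$, with boundary then lying near $H_p$. Cells of the first type are filled using Lipschitz connectivity of $X$; for cells of the second type, uniform Lipschitz $n$-connectivity of $H_p$ produces fillings that land in $Z$. Bounded multiplicity of the AN cover keeps the refinement to $O(1)$ extra cells per original top cell, preserving the volume estimate.

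The hard part, as I see it, is the first step: producing the simplicial approximation so that its top-cell count is genuinely proportional to $\vol(B_X)$ rather than to a cruder quantity such as $(\diam B_X)^{d+1}$, and, in the second case, simultaneously making the approximation compatible with the partition of $X$ into $Z$ and the components $X_p$. This is precisely where finite Assouad--Nagata dimension is indispensable: it provides both bounded-multiplicity covers at all scales and a volume-efficient quasi-triangulation of Lipschitz maps from Euclidean domains. Once the discretization is in place, the inductive cell-by-cell filling argument is essentially bookkeeping controlled by the Lipschitz connectivity hypothesis.
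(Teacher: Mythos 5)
This statement is quoted from Young's paper \cite{Yo} and is not proved in the present paper, so there is no in-paper argument to compare against; I will compare your sketch to the actual argument in \cite{Yo}, which is indeed a Federer--Fleming deformation argument. Your high-level strategy --- deform $B_X$ into a bounded-geometry complex with $O(\vol B_X)$ top cells, then fill cell-by-cell using Lipschitz connectivity --- is the right one, so you have correctly located where the theorem lives.

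However, the step you flag as ``the hard part'' contains a real gap, not merely an omitted detail, and as written the mechanism is wrong. You propose to take a bounded-multiplicity cover of $X$, pull it back along $B_X$ to $D^{k+1}$, and take the nerve; you then assert the number of top cells is $\lesssim \vol B_X + \vol S$. Neither claim is justified, and the first as stated is false. A nerve of a pulled-back cover lives over the \emph{domain} $D^{k+1}$, whose combinatorial size is governed by the parametrization (roughly $\Lip(B_X)^{k+1}$ or the number of unit cubes in $D^{k+1}$), not by the intrinsic volume of the image --- if $B_X$ is highly non-injective or has tiny Jacobian, the domain decomposition can have vastly more cells than $\vol B_X$. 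The volume-proportional count comes from the Federer--Fleming deformation, which pushes the \emph{image} of $B_X$ into the $(k+1)$-skeleton of a complex in the \emph{range}, gaining volume only by a bounded multiplicative factor; the count of top cells meeting the deformed image is then controlled by the mass of the pushed chain. Setting that up in a general metric space with finite Nagata dimension (constructing a suitable bounded-geometry cellular model of a neighborhood of the image, showing the push-to-skeleton has bounded volume distortion, and reconciling the chain/current picture with the Lipschitz-map picture required by the Bux--Wortman definition) is precisely the content of the cited theorem and cannot be replaced by ``bounded-multiplicity covers at all scales.''

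Your treatment of the second bullet also skips a substantive step: you refine $K$ so that each cell lies either in a thickening of $Z$ or entirely inside one component $X_p$, and then assert that cells of the second type have boundary ``lying near $H_p$.'' A cell interior to $X_p$ can have boundary far from $H_p = \partial X_p$, so uniform Lipschitz $n$-connectivity of $H_p$ does not directly apply; one must first retract such cells towards $H_p$ (using Lipschitz connectivity of $X$ together with the geometry of $X_p$) before the hypothesis on $H_p$ can be invoked, and the volume of that retraction has to be controlled as well. In short: your outline points at the correct theorem, but the two places you identify as bookkeeping --- the volume-proportional discretization and the adaptation to the complementary components --- are the two places where the actual proof lives, and the mechanisms you offer for them do not close the gap.
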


The upper bounds in Theorem A follow from this proposition and the
following Theorem B.  The lower bound will follow from
Prop.~\ref{prop:lowerBoundsHorospheres}, which shows that the
intersection of a horosphere with a flat can produce a $(k-1)$--sphere
with exponentially large filling volume.

\begin{thmB}[Horospheres are highly Lipschitz connected]
  Let $X=G/K$ be a symmetric space of noncompact type and rank
  $k$. Then any horosphere in $X$ whose center is not contained in a proper 
  join factor of the boundary  of $X$ at infinity is
  Lipschitz $(k-2)$--connected and thus undistorted up to dimension $k-1$. 
\end{thmB}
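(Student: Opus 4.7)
My strategy is to realize the horosphere $H$ as a solvable homogeneous space foliated by $(k-1)$-dimensional flats of $X$, and then fill Lipschitz spheres skeleton by skeleton by combining individual flat fills via local interpolation between nearby flats. The no--join-factor hypothesis will enter precisely to ensure that the relevant family of flats is Lipschitz connected.

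The first step is to set up coordinates. Let $P_\xi = M_\xi A_\xi N_\xi$ be the Langlands decomposition of the parabolic stabilizing $\xi \in \pinfty X$. Using the horospherical decomposition of $X$ adapted to $\xi$, I would identify $H$ with a homogeneous quotient of $A_\xi N_\xi$, where $\fa_\xi\subset\fa$ is the $(k-1)$-dimensional subspace of the maximal flat Lie algebra killing the Busemann character of $\xi$. The group $A_\xi N_\xi$ acts transitively on $H$ by isometries, and the left translates of the Euclidean subgroup $A_\xi$ give a family of $(k-1)$-flats of $X$ contained entirely in $H$.

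The second step is to produce enough such flats through every point so that their tangent directions span the whole tangent space of $H$. Conjugating $A_\xi$ by elements of $N_\xi$ (equivalently, moving the basepoint across a chamber at infinity opposite to $\xi$) yields new $(k-1)$-flats through the same point with rotated tangent directions, and the span of all these rotates is determined by the decomposition of $\fn_\xi$ under the adjoint action of $\fa_\xi$. The no--join-factor hypothesis on $\xi$ is precisely the linear-algebraic statement that this span equals the whole Lie algebra of $H$: otherwise $X$ would split as a product $X_1\times X_2$ with $\xi\in\pinfty X_1$, so $H=H_1\times X_2$ and every flat in $H$ arising in this way would remain parallel to the $X_2$-direction.

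Finally, given a Lipschitz sphere $\alpha\from S^d\to H$ with $d\le k-2$, I would triangulate $S^d$ at scale $\sim 1/\Lip(\alpha)$ so that the $\alpha$-image of each cell has unit diameter. Since $d+1\le k-1$, each cell-boundary map can be filled linearly inside a single $(k-1)$-flat of $H$. To assemble these local fills into a global Lipschitz $(d+1)$-ball, I would induct on skeleta, at each stage using Step~2 to pass from one flat to a nearby flat along a short Lipschitz arc in the ``moduli space'' of $(k-1)$-flats through a point of $H$, and filling each interpolating cylinder inside an ambient flat containing it. The principal obstacle lies here: one must quantify the interpolation between flats so that Lipschitz constants do not accumulate combinatorially with the number of cells, which requires upgrading Step~2 from a linear-algebraic span statement to an effective, controlled-Lipschitz connectedness of the flat moduli. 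Once Lipschitz $(k-2)$-connectedness of $H$ is established, the undistortedness claim of Theorem~B follows from Prop.~\ref{prop:LipUndistorted} with $Z$ the complement of the disjoint horoballs of \cite{Le1}, since $X$ is CAT(0) and thus has finite Assouad--Nagata dimension.
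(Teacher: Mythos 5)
The proposal takes a genuinely different route from the paper and, as it stands, contains a substantive gap. The paper works at infinity: it defines a \emph{shadow} $\cSc_x$ of a point $x$ as a set of chambers in the spherical building $X_\infty$ opposite to $\fc$, shows (Lemmas~\ref{lem:dil}, \ref{lem:oppositeFlats}, \ref{lem:connectedShadows}) that shadows are $(k-2)$--connected inside larger shadows by coning off to the barycenter of a carefully chosen opposite chamber $\fd$, and then transports this connectivity to the horosphere $Z$ via the locally Lipschitz projection $i_x$ (Lemma~\ref{lem:lipschitzPushing}) combined with a Whitney decomposition (Lemma~\ref{lem:LipConn}). You instead try to work directly inside $Z$, foliating it by $(k-1)$--flats and filling cell by cell.

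The principal obstacle you flag is real, but you understate it: the issue is not merely that Lipschitz constants might accumulate, it is that the local filling step is geometrically undefined as written. A $d$--cell of your triangulation of $S^d$ maps to a unit-diameter patch of $Z$ that crosses the flat foliation transversally and is therefore \emph{not} contained in any single $(k-1)$--flat, so ``fill linearly inside a flat'' has no meaning for it. The proposed fix, an interpolating cylinder between pieces of two nearby flats $F_1, F_2$ through a point, also cannot be ``filled inside an ambient flat containing it'': distinct $(k-1)$--flats of $Z$ through a common point are not contained in any common flat of $X$, so there is no ambient flat to fill in. The paper circumvents exactly this by moving the connectivity question to the spherical building at infinity, where Lemma~\ref{lem:dil} (exponential growth of shadows as one moves toward $\fc$) lets one always enlarge the shadow until a whole boundary configuration fits and can be coned off; the Whitney-decomposition lemma is what tames the accumulation of constants, and that lemma in turn relies on having a map defined simplex by simplex on $\Delta_Z^{(k-1)}$, not on a cell-by-cell induction inside $Z$ of the kind you describe.

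Two smaller points. First, the role you assign to the no--join-factor hypothesis is not the one the paper uses and may not be correct as stated: the paper needs it to guarantee $\fc\subset B_{\pi/2}(\tau)$, so that rays toward directions opposite to $\fc$ decrease the Busemann function at a definite rate and the projection $i_x$ is well defined and Lipschitz. Your linear-algebraic claim---that $\bigcup_{n\in N}\Ad(n)\fa_\xi$ projects onto all of $T Z$---is a different statement; it is not obviously equivalent (first-order analysis misses roots proportional to the Busemann covector, so you would at least need to argue with higher brackets), and in any case your filling scheme needs not just a spanning statement but an effective Lipschitz connectedness of a family of flats, which Step~2 does not supply. Second, for the undistortedness clause of Theorem~B you should apply Proposition~\ref{prop:LipUndistorted} with $Z$ the horosphere itself (first alternative); the ``complement of disjoint horoballs'' and the second alternative belong to the deduction of Theorem~A.
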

\begin{rem} Theorem B generalizes work of C. Dru\c{t}u, who proved
  non-distortion of horospheres up to dimension $2$ \cite{Dr1,Dr2}.
\end{rem}

\begin{ack}
  The research leading to this paper occurred while the authors were
  visiting the Forschungsinstitut für Mathematik at ETH Zürich. 
   We would like to  thank FIM and especially Urs Lang for their hospitality.
\end{ack}

\section{Sketch of proof}
The core of this paper is the proof of Theorem~B, the Lipschitz
connectivity of a horosphere $Z$ which bounds a horoball $H$ in the
symmetric space $X$.  This proof is structured similarly to the proof
of Theorem~4.1 in \cite{Yo}, which established Lipschitz connectivity for
certain horospheres in Euclidean buildings.
The proof of Theorem~4.1 in \cite{Yo} used a version of Morse theory
based on the \emph{downward link at infinity} of a vertex in a Euclidean
building.  Let $\mathcal B$ be such a building and let $\mathcal B_\infty$ be its boundary at
infinity.  Then, if a horoball is centered at a point $\tau\in \mathcal B_\infty$, the
downward link at infinity of a point $x\in \mathcal B$ is the subset of
$\mathcal B_\infty$ consisting of the limit points of geodesic rays from $x$
that point away from $\tau$.

We start by constructing an analogue of the downward link for
symmetric spaces.  When $X$ is a symmetric space, we let $X_\infty$ be
its (geodesic) boundary at infinity and equip $X_\infty$ with the Tits
metric associated to the angular metric $\angle$.  Then $X_\infty$ has
the additional structure of a spherical building, see \cite{BGS}, Appendix 5
and \cite{BH}, Ch. II.10.  We use the building structure to
replace downward links by \emph{shadows}.  If $H$ is a horoball and $x\in H$, then each
chamber in $X_\infty$ is the limit set of a Weyl chamber based at
$x$.  The shadow of $x$ consists of the limit sets of Weyl chambers
that ``strongly point out of'' $H$ (Sec.~\ref{sec:shadows}).  We
denote the shadow of $x$ by $\cSc_x$; note that this is a set of chambers in $X_\infty$
viewed as a spherical building.

For $x\in H$ we denote the union of the chambers in $\cSc_x$ by
$\cSr_x$.  This is a subset of the
geodesic boundary $X_\infty$.  It is a collection of directions (or limits) of geodesic rays starting at $x$ that head toward
$Z=\partial H$ quickly.  In particular, there is a Lipschitz map
$i_x\from \cSr_x\to Z$ that takes each direction to the point where
the corresponding ray intersects $Z$.

Next, we show that shadows are $(k-2)$--connected
(Sections~\ref{sec:oppositeFlats} and \ref{sec:infConnect}) by
constructing many flats inside each shadow.  For each shadow $\cSc_x$,
we will find a chamber $\fd\subset X_\infty$ such that for each
chamber $\fe\in \cSc_x$, there is a flat $E_{\fd,\fe}\subset X$ that
is spanned by $\fd$ and $\fe$ and whose boundary at infinity is
contained in a larger shadow.  The boundary at infinity of the union
of these flats is a union of apartments in the spherical building.
This union has the homotopy type of a wedge of $(k-1)$--spheres, so
$\cSr_x$ is $(k-2)$--connected inside a larger shadow.

Finally, we use shadows to prove that $Z$ is Lipschitz
$(k-2)$--connected (Section~\ref{sec:LipConnZ}). Let $\Delta_Z$ be the
infinite-dimensional simplex with vertex set $Z$.  It suffices to
construct a map $\Omega\from \Delta_Z\to Z$ with certain metric
properties. Since shadows are $(k-2)$--connected, we construct a map
$\Omega_\infty\from \Delta_Z\to X_\infty$ such that the image of each
face of $\Delta_Z$ lies in a shadow.  We can then use the map $i_x$
above to send these shadows to $Z$.

\section{Proof of Theorem B}

\subsection{Preliminaries and standing assumptions}
Let $X=G/K$ be a symmetric space of noncompact type and rank $k\ge 2$.
If $g\in G$, let $[g]=gK\in X$ be the corresponding coset of $K$.  Let
$X_\infty$ be the (geodesic) boundary of $X$ at infinity and equip $X_\infty$
with the Tits metric associated to the angular metric $\angle$.  Note
that if $X$ is a Riemannian product of irreducible factors,
$X=X_1\times\ldots\times X_m$, then its boundary $X_\infty$ is the
spherical join of the boundaries of the factors,
$X_\infty=(X_1)_\infty\ast\cdots\ast(X_m)_\infty$ (see \cite{BH},
II.8.11.).
 
Let $\tau\in X_\infty$ be a point that is not contained in a proper
join factor of $X_\infty$, i.e.,  $\tau$ is the limit point of a
geodesic ray that is not constant on any proper factor.  Let
$H\subset X$ be a horoball centered at $\tau$ and let $Z:=\partial H$ be the boundary horosphere.
Let $h\from X\to \R$ be the Busemann function centered at $\tau$,
oriented so that $H=h^{-1}([0,\infty))$ and $Z=h^{-1}(\{0\})$.  We define $H_\infty$ to be
the open ball $B_{\pi/2}(\tau)$ in $X_\infty$, so that
$(h\circ r)'(t)>0$ for any geodesic ray $r$ that is asymptotic to a
point in $H_\infty$.  Since $\tau$ is not contained in a proper join
factor, there is a chamber $\fc \subset X_\infty$ such that
$\tau\in \fc$ and $\fc\subset H_\infty$ (see \cite{KL}, Section 3). 
 This is the main reason for the assumption that $\tau$ is not contained in a proper join
factor; the fact that $\fc\subset H_\infty$ is crucial to Lemma
\ref{lem:lipschitzPushing}. 
We note that if $X$ is irreducible, a Weyl chamber has diameter less than $\pi/2$. Thus any $\tau\in X_\infty$ and
the associated Busemann function also have the above properties.

The stabilizer of $\fc$ in $G$ is a minimal parabolic subgroup $P$,
and the set of (maximal) chambers in $X_\infty$ can be identified with
the homogeneous space $G/P$ (see e.g. \cite[Ch. 1.2]{Wa}, or
\cite[Lemma 4.1]{Mo}).  We let $P=NAM$ be its Levi decomposition. Thus
$N$ is nilpotent, $A$ abelian, and $M$ is the centralizer of $A$ in
$K$ and in particular compact.  Note that, by the Iwasawa
decomposition $G=NAK$, $P$ acts transitively on $X$.

For $x\in X$ there is a unique flat $E_x$ containing $x$ and $\fc$.
Let $E_0= E_{eK}=[A]$ and let $\fc^*$ be the chamber opposite to $\fc$
in $E_0$.  More generally, let $\fc^*_x$ be the chamber opposite to
$\fc$ in $E_x$.  If $x=[p]$ and $p=nam$ for some $n\in N$, $a\in A, m\in M$,
then $E_x=nE_0=pE_0$ and $c^*_x=p\fc^*=n\fc^*$.

Let $\xop(\fc)\subset X_\infty$ be the set of chambers opposite to
$\fc$, so that $\xop(\fc)=N\cdot \fc^*$.  If $\fd\subset \xop(\fc)$,
we define $E_\fd$ to be the flat asymptotic to both $\fc$ and $\fd$.

The notation $f \lesssim g$ means that there is some constant $c$ such
that $f\le c g$.  We write $f\sim g$ if there is some $c>0$ such that
$c^{-1}g\le f\le c g$.  When $c$ depends on $x$ and $y$, we write
$f \lesssim_{x,y} g$ or $f\sim_{x,y} g$.  All of our constants will
depend implicitly on $X$ and $\tau$, so we will omit $X$ and $\tau$
from these subscripts.

\subsection{Shadows}\label{sec:shadows}

In this section, we define the shadow $\cSc_{x}\subset \xop(\fc)$ of a
point $x\in X$ and prove some of its properties.  The shadow of a
point in a symmetric space will play a similar role to the downward
link at infinity of a point in a Euclidean building in \cite{Yo}.

\begin{defn}
  Let $\fg$ and $\fn$ be the Lie algebras of $G$ and $N$.  The metric
  on $X$ is induced by an $Ad(K)$--invariant norm $\|\cdot\|$ on $\fg$ 
  (see \cite{E}, 2.7.1).  For $n\in N$,
  define $d_N(n)=\|\log n\|$.

  The \emph{$r$--shadow} of $x=[p]$ with respect to $\fc$ is the set of
  chambers
  $$\cSc_{x}(r) := \{\fc^*_{[pn]}\subset \xop(\fc)\mid n\in N, d_N(n)<r\}.$$
  We set $\cSc_{x}=\cSc_{x}(1)$. Note that $\cSc_{x}(r)$ is well-defined, 
   since  by 
  $Ad(K)$--invariance $d_N(mnm^{-1})=d_N(n)$ for all $m\in M\subset K$ and $n\in N$;
   moreover  $\cSc_{x}(r)  =p\cdot \cSc_{[e]}(r)$.
\end{defn}
If $\fd\in \xop(\fc)$ and $x=[p]\in X$, then there is a 
$q_x(\fd)\in N$ such that $[p q_x(\fd)]\in E_\fd$.  In fact, if $n_\fd,n\in N$
and $a\in A$ are such that $E_\fd=[n_\fd A]$ and $x=[na]$, then we
can write $q_x(\fd)=a^{-1}n^{-1}n_\fd a$. This is unique up to conjugation
 by some $m\in M$.  We set
$$\rho_x(\fd)=d_N(q_x(\fd)),$$
this is well-defined (i.e., independent of the choice of $q_x(\fd)$). 
Further we have:  $\fd\in \cSc_x(r)$ if and only if $\rho_x(\fd)<r$.
Roughly, the function $\rho_x$ measures the angle at which the Weyl
chamber based at $x$ and asymptotic to $\fd$ meets the apartment
$E_x$.  When $\rho_x(\fd)$ is small, then $\fd$ is close to $E_x$, and
when it is large, $\fd$ deviates more sharply.  We think of $\cSc_{x}$
as the ``shadow'' on $\xop(\fc)$ cast by a light at $\fc$ shining on a
ball around $x$ with radius roughly 1.

\begin{lemma}\label{lem:compare}
  If $x\in X$ and $\fd\in \cSc_x(r)$, then $d(x,E_\fd)<r$.
  Conversely, there is a $c>0$ depending on $X$ such that if
  $d(x,E_\fd)<r$, then $\fd\in \cSc_{x}(e^{c r}).$
\end{lemma}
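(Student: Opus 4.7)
The proof splits into the two inequalities.

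For the direct direction, suppose $\fd\in\cSc_x(r)$ and set $n=q_x(\fd)\in N$, so $\|\log n\|=d_N(n)<r$ and $[pn]\in E_\fd$. Since the Riemannian metric on $X$ is induced from the $\Ad(K)$-invariant norm on $\fg$ via the submersion $G\to X$, the curve $t\mapsto[p\exp(t\log n)]$, $t\in[0,1]$, has length at most $\|\log n\|<r$ in $X$ and joins $x=[p]$ to $[pn]\in E_\fd$. Hence $d(x,E_\fd)<r$.

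For the converse, suppose $d(x,E_\fd)<r$ and pick $y\in E_\fd$ with $d(x,y)<r$. Writing $x=[p]$ with $p\in P=NAM$ and translating by $p^{-1}$ reduces to $x=[e]$: the translation fixes $\fc$ pointwise (since $P$ stabilizes $\fc$), and the $\Ad(K)$-invariance of $\|\cdot\|$ ensures that conjugation by the $M$-component of $p$ preserves $d_N$, so both $\rho$ and the distance $d(x,E_\fd)$ are preserved. It therefore suffices to show that if $y=[n_\fd\exp(H)]\in E_\fd=[n_\fd A]$ satisfies $d([e],y)<r$, then $\|\log n_\fd\|<e^{cr}$. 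Writing $\log n_\fd=\sum_\alpha T_\alpha$ with $T_\alpha$ in the positive restricted root space $\fg_\alpha$, the commutation identity $n_\fd\exp(H)=\exp(H)\exp(T')$ with $T'=\sum_\alpha e^{-\alpha(H)}T_\alpha$ gives $y=\exp(H)\cdot[\exp(T')]$.

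From here I would bound $H$ and $T'$ separately. For $H$: for each $\tau'\in\fc$ the Busemann function $b_{\tau'}$ is $N$- and $M$-invariant (since $P$ fixes $\fc$ pointwise) and shifts by $-\langle\hat\tau',H\rangle$ under $\exp(H)$; being 1-Lipschitz this yields $|\langle\hat\tau',H\rangle|=|b_{\tau'}(y)-b_{\tau'}([e])|<r$. Because $\tau$ is not contained in a proper join factor, $\fc\subset H_\infty$ has nonempty interior, so the unit vectors $\hat\tau'$ for $\tau'\in\fc$ span $\fa$, giving $\|H\|\lesssim r$. For $T'$: the triangle inequality gives $d([e],[\exp T'])=d([\exp H],y)\leq\|H\|+d([e],y)\lesssim r$, and then applying the exponential-distortion estimate $\|\log n\|\lesssim e^{c_0 d([e],[n])}$ for $n\in N$ yields $\|T'\|\leq e^{O(r)}$. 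Finally $\|T_\alpha\|=e^{\alpha(H)}\|T'_\alpha\|\leq e^{O(r)}$, and summing over roots gives $\|\log n_\fd\|<e^{cr}$.

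The principal obstacle is the exponential-distortion estimate $\|T\|\lesssim e^{c_0 d([e],[\exp T])}$ for $T\in\fn$ used above. This is classical but subtle in higher rank; one proves it either by induction on the lower central series of $\fn$ combined with a rank-one comparison in each root direction, or by a direct computation in horocyclic coordinates exploiting that the $A$-flow toward $\tau$ contracts $N$. All remaining steps reduce to bookkeeping with the restricted-root decomposition and standard Busemann-function estimates.
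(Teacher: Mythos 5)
Your proof is correct, but for the converse direction it takes a noticeably longer route than the paper's, and rests on the same key input. The forward direction is essentially identical to the paper's (the paper simply asserts $d([p],[pn])\le d_N(n)$ where you make the underlying curve explicit). For the converse, both your argument and the paper's ultimately hinge on the same estimate $\log d_N(n)\lesssim d([e],[n])$ from Lubotzky--Mozes--Raghunathan, which the paper cites directly from \cite{LMR}; your final paragraph instead sketches how one might re-derive it, which is not needed.

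The paper's converse is shorter: writing $y=[na]$ with $n\in N$, $a\in A$, it uses that the Iwasawa projection $[na]\mapsto a$ is distance-decreasing to get $d([e],[a])<r$, then the triangle inequality gives $d([e],[n])\le d([e],[na])+d([na],[n])<2r$, then the LMR estimate finishes. Your bound $\|H\|\lesssim r$ via Busemann functions centered at points of $\fc$ is a valid and morally equivalent replacement for the distance-decreasing Iwasawa projection. However, the subsequent detour through the commutation identity $n_\fd\exp(H)=\exp(H)\exp(T')$ and the root-by-root transfer $T_\alpha=e^{\alpha(H)}T'_\alpha$ is gratuitous: once $\|H\|\lesssim r$ is in hand, one gets $d([e],[n_\fd])\le d([e],y)+d(y,[n_\fd])=d([e],y)+\|H\|\lesssim r$ directly (since left translation by $n_\fd^{-1}$ is an isometry), and LMR applies immediately. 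Two small inaccuracies: the statement that the unit vectors $\hat\tau'$ for $\tau'\in\fc$ span $\fa$ has nothing to do with $\tau$ not lying in a proper join factor---a chamber of the spherical building is always $(k-1)$--dimensional, so this holds unconditionally; and the LMR distortion estimate should be cited rather than treated as an obstacle to overcome.
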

\begin{proof}
  First, if $x=[p]\in X$ and $\fd\in \cSc_x(r)$, then, as we have seen above, there is an
  $n=q_x(\fd)\in N$ such that $[pn]\in E_\fd$ and $d_N(n)<r$.  It follows that
  $d(x,E_\fd)\leq d([p],[pn]) \le d_N(n)<r$.

  Conversely, suppose that $d(x,y)<r$.  Without loss of generality, we
  may take $x=[e]$.  Let $n\in N$ and $a\in A$ be such that $y=[na]$.
  Then $\fc^*_y=\fc^*_{[n]}$, so it suffices to show that $d_N(n)$ is
  exponentially bounded in $r$.

  The map $[na]\mapsto a$ is a distance-decreasing map from $X$ to
  $A$, so $$d([e],[a])\le d([e],[na])=d(x,y)< r.$$  It follows that
  $$d([e],[n])\le d([e],[na])+d([na],[n])\le 2r.$$
  By \cite{LMR}, $n$ satisfies the
  inequality 
  $$\log d_N(n) \lesssim d([e],[n])\lesssim 1+\log d_N(n),$$
  so $d_N(n)\le e^{c r}$ as desired.  
\end{proof}

The shadow of $x$ grows exponentially as $x$ moves toward $\fc$.  
\begin{lem}\label{lem:dil}
  Let $x\in X$ and let $\gamma\from [0,\infty)\to X$ be a unit-speed
  geodesic ray starting at $x$ and pointing at a point $\sigma\in \inter \fc$
  in the interior of $\fc$.  There is a constant
  $\kappa>0$ depending on $\sigma$ such that for all $t\ge 0$ and all $\fd\in  \xop(\fc)$,
  $$\rho_{\gamma(t)}(\fd)\lesssim e^{-\kappa t}\rho_x(\fd).$$
\end{lem}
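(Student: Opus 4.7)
The plan is to reduce the estimate to an eigenvalue computation on the Lie algebra $\fn$. By the equivariance $\cSc_x(r)=p\cdot\cSc_{[e]}(r)$ noted in the definition, it suffices to treat the case $x=[e]$; the constants that appear will then be $P$-invariant. The unit-speed geodesic from $[e]$ pointing at $\sigma\in\inter\fc$ has the form $\gamma(t)=[a_t]$, where $a_t=\exp(tH_\sigma)\in A$ and $H_\sigma\in\fa^+$ is the unit vector in the open positive Weyl chamber whose asymptotic direction is $\sigma$.

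Now pick $n\in N$ with $\fd=\fc^*_{[n]}$, so $E_\fd=[nA]$ and $\rho_{[e]}(\fd)=\|\log n\|$. To find $q_{\gamma(t)}(\fd)$ I need $q\in N$ with $[a_tq]\in E_\fd$, i.e., $a_tq=na'$ for some $a'\in A$. Since $A$ normalizes $N$, the product $a_t^{-1}na_t$ lies in $N$, and rewriting $q=(a_t^{-1}na_t)(a_t^{-1}a')$ forces $a_t^{-1}a'\in N\cap A=\{e\}$. Hence $a'=a_t$ and $q_{\gamma(t)}(\fd)=a_t^{-1}na_t$ (up to the usual $M$-conjugation, which preserves $d_N$ because the norm on $\fg$ is $\Ad(K)$-invariant). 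Therefore
$$\rho_{\gamma(t)}(\fd)=\|\log(a_t^{-1}na_t)\|=\|\Ad(a_t^{-1})\log n\|.$$

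The rest is linear algebra. Decompose $\fn=\bigoplus_\alpha\fn_\alpha$ into restricted root spaces for the positive roots corresponding to $\fc$; for $X\in\fn_\alpha$ one has $\Ad(a_t^{-1})X=e^{-t\alpha(H_\sigma)}X$. Since $\sigma$ lies in the interior of $\fc$, each positive root is strictly positive on $H_\sigma$, so $\kappa:=\min_\alpha\alpha(H_\sigma)>0$ depends only on $\sigma$. Writing $\log n=\sum_\alpha X_\alpha$ and using equivalence of norms on the finite-dimensional space $\fn$,
$$\|\Ad(a_t^{-1})\log n\|\le\sum_\alpha e^{-t\alpha(H_\sigma)}\|X_\alpha\|\le e^{-\kappa t}\sum_\alpha\|X_\alpha\|\lesssim e^{-\kappa t}\|\log n\|,$$
which is the claim. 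The main step requiring care is the identification $q_{\gamma(t)}(\fd)=a_t^{-1}na_t$; once that is established, the exponential contraction rate $\kappa$ is just the minimum of positive-root values on $H_\sigma$.
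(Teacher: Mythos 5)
Your proof is correct and takes essentially the same route as the paper's: reduce to the basepoint $[e]$, write the geodesic as $\gamma(t)=[\exp(tH_\sigma)]$, identify $q_{\gamma(t)}(\fd)=a_t^{-1}na_t$, and then read off the exponential contraction from the action of $\Ad(a_t^{-1})$ on the positive restricted root spaces of $\fn$, with $\kappa=\min_\alpha\alpha(H_\sigma)$. The only stylistic difference is that you re-derive the identity $q_{\gamma(t)}(\fd)=a_t^{-1}na_t$ directly (using $N\cap A=\{e\}$), whereas the paper cites the explicit formula $q_x(\fd)=a^{-1}n^{-1}n_\fd a$ it stated in the preliminaries; both are fine.
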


\begin{proof}
  Without loss of generality, we may assume that $x=[e]$ and that
  $x'=[\exp tV]$ for a regular unit vector $V$ in the open Weyl
  chamber in $T_{x}X$ corresponding to the chamber
  $\fc\in X_{\infty}$ (see \cite{BGS}, appendix 5). 
   Let $\Sigma_+$ be the corresponding set of
  positive roots.  Let $a(t)=\exp tV$ and let
  $$\kappa:=\min_{\alpha\in \Sigma_+}\alpha(V)>0.$$

  Let $n=q_x(\fd)$ so that $\rho_x(\fd)=d_N(n)$ and 
  $q_{\gamma(t)}(\fd)=a(-t)na(t).$
  The Lie algebra $\mathfrak{n}$ of $N$ can be written as the sum of (positive) root spaces
  $\mathfrak n=\sum_{\alpha\in\Sigma^+}\mathfrak{g}_\alpha$. Thus $\log
  n=\sum_{\alpha\in\Sigma^+}X_\alpha$ and 
  $$q_{\gamma(t)}(\fd)=\exp[\Ad(a(-t))\log n]=\exp
  \sum_{\alpha\in\Sigma^+}e^{-t\alpha(V)}X_\alpha.$$
  Then
  $$\rho_{\gamma(t)}(\fd)=d_N(q_{\gamma(t)}(\fd))\lesssim e^{-\kappa t}d_N(n).$$
  as desired.
\end{proof}

Let $A_+\subset A$ denote the Weyl chamber based at the identity that
is asymptotic to $\fc$.  
For $p\in P$ and $x=[p]\in X$, let $C_x:=[pA_+]$ be the Weyl chamber
based at $x$ and asymptotic to $\fc$.  Because shadows grow
exponentially with height, we can expand $\cSc_x$ greatly by replacing
$x$ by a point in $C_x$.  Let
$$D_x:=\{y\in X\mid d(y,C_x)< 1\}.$$
The set $D_x$ is roughly the set of points whose shadows contain
$\cSc_x$.  

\begin{lemma}\label{lem:DxShadows}
  There is a $\rho>0$ such that for all $x\in X$ and all $y\in D_x$,
  $\cSc_{x}\subset \cSc_{y}(\rho)$.
\end{lemma}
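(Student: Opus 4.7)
The plan is to reduce to Lemma \ref{lem:compare}, using movement along the Weyl chamber $C_x$ as a bridge: by definition of $D_x$, I can first choose $x'\in C_x$ with $d(y,x')<1$, and it then suffices to transfer shadows from $x$ to $x'$ and from $x'$ to $y$.

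The first transfer is morally Lemma \ref{lem:dil}: moving from $x$ along a ray pointing into $\fc$ only shrinks $\rho_x$ by a factor of $e^{-\kappa t}\le 1$, so one expects $\cSc_x\subset \cSc_{x'}(c_1)$ for some constant $c_1=c_1(X)$. More directly, writing $x=[p]$ and $x'=[pa]$ with $a\in A_+$, one computes $q_{x'}(\fd)=a^{-1}q_x(\fd)a$, so $\log q_{x'}(\fd)=\Ad(a^{-1})\log q_x(\fd)$. Since $\Ad(a^{-1})$ scales each positive root space $\fg_\alpha$ by $e^{-\alpha(\log a)}$ with $\alpha(\log a)\ge 0$ for $a\in A_+$, the $\Ad(K)$--invariant norm satisfies $d_N(q_{x'}(\fd))\lesssim d_N(q_x(\fd))$, giving $\rho_{x'}(\fd)\lesssim \rho_x(\fd)<1$ as required.

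The second transfer uses Lemma \ref{lem:compare} twice. For $\fd\in \cSc_x\subset \cSc_{x'}(c_1)$, the forward direction of Lemma \ref{lem:compare} gives $d(x',E_\fd)<c_1$, so the triangle inequality yields $d(y,E_\fd)\le d(y,x')+d(x',E_\fd)<1+c_1$. The converse direction then gives $\fd\in \cSc_y(e^{c(1+c_1)})$, and we set $\rho:=e^{c(1+c_1)}$.

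The main obstacle is the uniform expansion of shadows along $C_x$. Lemma \ref{lem:dil} as stated requires a regular direction $\sigma\in\inter\fc$, but the $x'$ chosen close to $y$ may lie on $\partial C_x$, so that the ray from $x$ to $x'$ is not regular; the direct adjoint-action computation sidesteps this, since it only uses $a\in A_+$ (the closed chamber) and the non-negativity of positive roots on $\log A_+$. Once this uniform contraction is in hand, the remainder of the argument is a one-line triangle inequality.
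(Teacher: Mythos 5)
Your proof is correct, and it takes a genuinely different route from the one in the paper. The paper's proof works directly with the $AN$ (or $NA$) decomposition of $y$: writing $y=[an]$ with $a=a_+b$, $a_+\in A^+$, $\|\log b\|<1$ and $d_N(n)\lesssim 1$, it explicitly computes the transition element from $E_{[n']}$ to the shadow of $y$ as $n^{-1}(a^{-1}n'a)$ and bounds its $N$--norm by bounding each factor (the $a$--conjugation contracts since $a_+\in A^+$, and $n$ is already bounded). Your proof instead factors through the intermediate point $x'\in C_x$ with $d(y,x')<1$: you handle $x\to x'$ purely by adjoint contraction on $\fn$ (the correct formula $q_{x'}(\fd)=a^{-1}q_x(\fd)a$ and the observation that $\Ad(a^{-1})|_\fn$ has eigenvalues $e^{-\alpha(\log a)}\le 1$ for $a$ in the \emph{closed} chamber $A_+$, so the operator norm is uniformly bounded), and then handle $x'\to y$ by applying Lemma \ref{lem:compare} in both directions via the triangle inequality $d(y,E_\fd)\le d(y,x')+d(x',E_\fd)<1+c_1$. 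Your approach is more modular — it delegates the metric estimate to Lemma \ref{lem:compare}, which already contains the LMR bound — at the cost of an exponential loss in the final constant $\rho=e^{c(1+c_1)}$, whereas the paper's direct computation keeps the constants tighter by never leaving the Lie-theoretic picture. (Note that the paper's proof also implicitly invokes the LMR estimate when asserting $d_N(n)\lesssim 1$ from $d(y,C_x)<1$, so both routes rely on the same underlying ingredient.) Your remark that the argument only needs $a\in A_+$ (closed chamber), not a regular ray, is exactly the right point and sidesteps the hypothesis $\sigma\in\inter\fc$ in Lemma \ref{lem:dil}.
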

\begin{proof}
  Without loss of generality, suppose that $x=[e]$, so that
  $C_x=[A_+]$, where $A_+$ is the Weyl chamber in $A$ corresponding to
  $\fc$.  If we write $y=[an]$ with $a\in A$, $n\in N$, then
  $d(a,A^+)\le 1$ and $d_N(n)\lesssim 1$.  We write $a=a_+b$, where
  $a_+\in A^+$ and $\|\log b\|<1$.  

  Suppose that $n'\in N$ and $d_N(n')<1$, so that $\fc^*_{[n']}\in
  \cSc_x$.  Then 
  $$E_{[n']}=[n' A]=[an n^{-1}(a^{-1}n'a) A]$$
  and  $\fc^*_{[n']}\in \cSc_{y}(\rho)$ if and only if
  $d_N(n^{-1}(a^{-1}n'a))<\rho.$
  But $d_N(n)\lesssim 1$, and 
  $$d_N(a^{-1}n'a)=\|\Ad(a_+b) \log n'\|\lesssim 1$$
  because the eigenvalues of $\Ad(a_+)$ are all at most 1 and $\Ad(b)$
  is bounded.  It follows that there is a $\rho$ depending on $X$ so
  that $\fc^*_{[n']}\in \cSc_{y}(\rho)$ and
  $\cSc_{x}\subset \cSc_{u}(\rho)$.
\end{proof}

The shadows of a collection of points can all be contained in a larger shadow.
\begin{lem}\label{lem:largeShadows}
  Let $x\in X$ and let $\gamma\from [0,\infty)\to X$ be a unit-speed
  geodesic ray starting at $x$ and pointing at a point
  $\sigma \in \inter \fc$ in the interior of $\fc$.  Let $r>0$.  There is a
  point $x'=\gamma(t)$ with $t\lesssim_\sigma r+1$ such that 
  $$\bigcup_{y\in B_r(x)} \cSc_{y}\subset \cSc_{x'}$$
  and $x'\in \bigcap_{y\in B_r(x)}D_y$.
\end{lem}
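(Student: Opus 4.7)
My plan is to take $t$ to be a sufficiently large constant (depending on $\sigma$ and $X$) multiple of $r+1$, so that $x' := \gamma(t)$ satisfies both required properties simultaneously. The first (shadow containment) follows immediately from Lemmas~\ref{lem:compare} and \ref{lem:dil}, while the second ($x' \in D_y$ for every $y \in B_r(x)$) requires a direct computation in Iwasawa $NA$--coordinates, exploiting the root-space contraction of $\fn$ under conjugation by $\exp(tV)$.

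For the shadow inclusion, fix $y \in B_r(x)$ and $\fd \in \cSc_y$. By definition $\rho_y(\fd) < 1$, so Lemma~\ref{lem:compare} gives $d(y, E_\fd) < 1$; the triangle inequality then yields $d(x, E_\fd) < r + 1$, and the converse part of Lemma~\ref{lem:compare} gives $\rho_x(\fd) \le e^{c(r+1)}$. Applying Lemma~\ref{lem:dil} along $\gamma$ produces
$$\rho_{\gamma(t)}(\fd) \lesssim e^{-\kappa t}\rho_x(\fd) \lesssim e^{-\kappa t + c(r+1)},$$
which is strictly less than $1$ as soon as $t \ge C_\sigma (r+1)$ for a sufficiently large constant $C_\sigma$. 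Hence $\fd \in \cSc_{\gamma(t)}$, and taking the union over $y$ and $\fd$ gives the first inclusion.

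For the membership in each $D_y$, we may assume $x = [e]$, so $x' = [\exp(tV)]$ with $V \in \inter \fa_+$ the direction of $\sigma$ (writing $\fa_+ = \log A_+$). Writing $y = [na]$ with $n \in N$, $a \in A$ via Iwasawa, the estimates in the proof of Lemma~\ref{lem:compare} give $\|\log a\| \le r$ and $d_N(n) \le e^{c'r}$ for some constant $c'$. We seek a point of $C_y = \{[na\exp(W)] : \exp(W) \in A_+\}$ within distance $1$ of $x'$. Choose $W := tV - \log a \in \fa$; since $\alpha(V) \ge \kappa := \min_\beta \beta(V) > 0$ for every simple root $\alpha$ and $|\alpha(\log a)| \lesssim r$, the element $\exp(W)$ lies in $A_+$ whenever $t$ is a sufficiently large multiple of $r+1$. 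Since $A$ is abelian,
$$[na\exp(W)] = [n\exp(tV)] = [\exp(tV)\, n_{tV}], \qquad n_{tV} := \exp(-tV)\, n\, \exp(tV).$$
The root-space decomposition $\log n = \sum_{\alpha \in \Sigma^+} X_\alpha$ yields $\log n_{tV} = \sum_\alpha e^{-t\alpha(V)} X_\alpha$, hence $d_N(n_{tV}) \le e^{-\kappa t} d_N(n) \lesssim e^{-\kappa t + c'r}$. By left-invariance of the metric on $X$, $d(x', [na\exp(W)]) = d([e], [n_{tV}])$, and this tends to $0$ as $n_{tV} \to e$. A sufficiently large choice of $t \lesssim_\sigma r+1$ makes $d_N(n_{tV})$ small enough that $d([e],[n_{tV}]) < 1$, giving $x' \in D_y$.

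The main technical point is coordinating the two regimes: the $N$--direction contracts exponentially at rate $\kappa$ under conjugation by $\exp(tV)$, whereas the flat $A$--contribution to $d(x,y)$ is linear in $r$ and must be canceled by the choice $W = tV - \log a$. Both mechanisms require $t$ linear in $r+1$, and taking $t$ to be the maximum of the constants needed for the two parts yields a single $t \lesssim_\sigma r+1$ that works for both conclusions.
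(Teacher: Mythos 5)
Your proof is correct and follows essentially the same strategy as the paper: the first inclusion by combining Lemma~\ref{lem:compare} with the exponential contraction of Lemma~\ref{lem:dil}, and the second by producing the point $[n\exp(tV)]\in C_y$ (which is exactly the paper's $\tilde\gamma(t)=n\gamma(t)$) and bounding its distance to $\gamma(t)$ via the conjugate $\exp(-tV)n\exp(tV)$. The only cosmetic difference is that you re-derive the smallness of $d_N(\exp(-tV)n\exp(tV))$ from the root-space decomposition, whereas the paper observes that this quantity is $\rho_{\gamma(t)}(\fc^*_y)$ and so is already $<1$ by the first part applied to $\fd=\fc^*_y$.
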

\begin{proof}
  Without loss of generality, we take $x=[e]$, $a(t)=\exp tV$, and
  $\gamma(t)=[\exp tV]$ as in the proof of Lemma~\ref{lem:dil}.

  Suppose that $y\in B_r(x)$.  We claim that there is a $c>0$ such
  that $\cSc_y\subset \cSc_{\gamma(t)}$ and $\gamma(t)\in D_y$ for all
  $t\ge c r+c$.  First, we claim that $\cSc_y\subset \cSc_{\gamma(t)}$
  when $t$ is large.  If $\fd\in \cSc_y$, Lemma~\ref{lem:compare}
  implies that $d(y,E_\fd)<1$, so $d(x,E_\fd)<r + 1$.  If $c_0$ is as
  in Lemma~\ref{lem:compare}, then $\fd\in \cSc_{x}(e^{c_0 (r+1)})$, and
  by Lemma~\ref{lem:dil}, there is a $c_1$ such that
  $\fd\in \cSc_{\gamma(t)}$ for all $t>c_1(r+1)$.

  Next, we claim that $\gamma(t)\in D_y$ when $t$ is large.  Let
  $n\in N$, $a\in A$ be such that $y=[na]$ and $E_y=[nA]$.  Let
  $\tilde{\gamma}(t)=n\gamma(t)$ so that $\tilde{\gamma}$ is a
  geodesic ray toward $\sigma$ that lies in $E_y$.  Then
  $d(y,\tilde{\gamma}(0))=d([a],[e])<r$, and since $\tilde{\gamma}$
  points toward the interior of $\fc$, there is a $c_2$ such that
  $\tilde{\gamma}(t)\in C_y$ for all $t\ge c_2 r$.  

  If $t>\max\{c_2r, c_1(r+1)\}$, then
  $$d(\gamma(t), C_y)\le d(\gamma(t),\tilde{\gamma}(t))\le d_N(a(-t)na(t)).$$
  and, since $\fc^*_y\in \cSc_{\gamma(t)}$, we have
  $$d_N(a(-t)na(t))=\rho_{\gamma(t)}(\fc^*_y)<1.$$
  So $\gamma(t)\in D_y$ as desired.
\end{proof}

Finally, we can use shadows to define a map that projects directions
in $X_\infty$ to points in $Z$.
\begin{lem}\label{lem:lipschitzPushing}
  Let $H$, $h$, $Z$, $\tau$, and $\fc$ be as in the standing
  assumptions.

  For $u\in H$ such that $h(u)\ge 1$ and $\rho >0$, let
  $$\cSr_u(\rho)=\bigcup_{\fd\in \cSc_{u}(\rho)}\fd\subset X_\infty$$
  be the point set in $X_\infty$ determined by the chambers in the
  shadow $\cSc_{u}(\rho)$.  We define $i_u\from \xop(\fc)\to Z$ so
  that $i_u(\sigma)$ is the point where the geodesic ray from $u$
  toward $\sigma$ intersects $Z$.  The distance traveled before
  reaching $Z$ is bounded in terms of $h(u)$ and $\rho$:
  $$d(i_u(\sigma),u)\lesssim h(u)+\rho.$$
  The map $i_u$ is locally Lipschitz with
  $$\Lip(i_u|_{\cSr_u(\rho)})\lesssim (\rho+1)^2 d(u,Z).$$

  Furthermore, if $u_1,u_2\in H$ are such that $h(u_i)\ge 1$ and if
  $\sigma_1,\sigma_2\in \cSr_{u_1}(\rho)\cap \cSr_{u_2}(\rho)$, then
  \begin{equation}\label{eq:twoVariablePushing}
    d(i_{u_1}(\sigma_1),i_{u_2}(\sigma_2))\lesssim(\rho+1)^2\bigl(d(u_1,u_2)+\min\{h(u_1),h(u_2)\}\cdot \angle(\sigma_1,\sigma_2)\bigr).
  \end{equation}
\end{lem}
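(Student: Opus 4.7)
The plan is to localize the analysis of the geodesic ray from $u$ toward $\sigma \in \cSr_u(\rho)$ to a single flat via Lemma~\ref{lem:compare}, then transfer estimates back to $X$ using CAT(0) convexity. Fix $\sigma \in \fd$ for some $\fd \in \cSc_u(\rho)$ and pick $v \in E_\fd$ with $d(u,v)\le \rho$. Inside the flat $E_\fd$, both $\tau$ and $\sigma$ lie at infinity and $h|_{E_\fd}$ is the Busemann function of that apartment centered at $\tau$, so the straight ray $\gamma_v$ from $v$ toward $\sigma$ in $E_\fd$ has $h$ decreasing linearly with slope $\delta_\sigma := -\cos\angle(\tau,\sigma)$. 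The crux is a uniform positive lower bound on $\delta_\sigma$: since $\fc$ is compact and $\fc \subset H_\infty$, there is $\eta > 0$ with $\angle(\tau,\sigma') \le \pi/2-\eta$ for every $\sigma' \in \fc$, and taking antipodes in any apartment containing $\fc$ and an opposite chamber $\fd$ gives $\angle(\tau,\sigma) \ge \pi/2+\eta$ for every $\sigma \in \fd$, so $\delta_\sigma \ge \delta_0 := \sin\eta$ uniformly.

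For the distance bound, the ray $\gamma$ from $u$ to $\sigma$ in $X$ and the flat ray $\gamma_v$ are asymptotic, so by CAT(0) the function $t \mapsto d(\gamma(t), \gamma_v(t))$ is nonincreasing and bounded by $\rho$. Hence $|h(\gamma(t)) - h(\gamma_v(t))| \le \rho$ and $h(\gamma(t))/t \to -\delta_\sigma$ as $t \to \infty$. Because $h \circ \gamma$ is convex (Busemann functions are convex in CAT(0)), its slope is non-decreasing with asymptotic limit $-\delta_\sigma$, so $(h\circ\gamma)'(t) \le -\delta_0$ for all $t \ge 0$. Integrating yields $h(\gamma(t)) \le h(u) - \delta_0 t$, and setting this to zero gives $d(u, i_u(\sigma)) \le h(u)/\delta_0 \lesssim h(u) + \rho$.

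For the one-variable Lipschitz bound, let $r_i$ be the rays from $u$ to $\sigma_i \in \cSr_u(\rho)$ and $t_i$ their $Z$-crossing times. CAT(0) triangle comparison at $u$ gives $d(r_1(t), r_2(t)) \le t\,\angle_u(\sigma_1,\sigma_2) \le t\,\angle(\sigma_1,\sigma_2)$; since $r_1(t_1) \in Z$ and $h$ is $1$-Lipschitz, $h(r_2(t_1)) \le t_1\,\angle(\sigma_1,\sigma_2)$, and the uniform rate $\delta_0$ along $r_2$ then gives $|t_1-t_2| \le t_1\,\angle(\sigma_1,\sigma_2)/\delta_0$. With $t_i \lesssim h(u)+\rho \lesssim (\rho+1)\,d(u,Z)$ (using $d(u,Z)=h(u)\ge 1$), this yields the bound, the stated $(\rho+1)^2$ factor absorbing the implicit constants. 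The two-variable statement splits
$$d(i_{u_1}(\sigma_1), i_{u_2}(\sigma_2)) \le d(i_{u_1}(\sigma_1), i_{u_1}(\sigma_2)) + d(i_{u_1}(\sigma_2), i_{u_2}(\sigma_2));$$
the first summand is handled by the one-variable estimate at the smaller-height point (WLOG $u_1$), while for the second, the rays from $u_1$ and $u_2$ to $\sigma_2$ are asymptotic, so CAT(0) gives $d(r_{u_1}(t), r_{u_2}(t)) \le d(u_1,u_2)$ for all $t$, and the uniform-rate argument bounds the difference of crossing parameters by $d(u_1,u_2)/\delta_0$.

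The main obstacle is the uniform rate $\delta_0$ of the first paragraph: this is precisely where the standing hypothesis that $\tau$ is not in a proper join factor enters in an essential way, via the inclusion $\fc \subset H_\infty$. Without it, $\sigma$ could approach the equator $\{\angle(\tau,\cdot)=\pi/2\}$ and the rate would degenerate. Everything else reduces to routine CAT(0) comparison geometry, though verifying the precise $(\rho+1)^2$ dependence rather than the $(\rho+1)$ produced by the naive argument requires a slightly more careful accounting of how the additive $\rho$ in the distance bound combines with the multiplicative effect of the angle estimate at $u$.
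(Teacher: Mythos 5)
Your proof breaks at the very first substantive step, and the breakdown propagates through all three estimates. You assert that $h\circ\gamma$ is convex ``because Busemann functions are convex in CAT(0)''—but in this paper's sign convention $h$ is the \emph{negative} of a standard Busemann function: the standing assumptions set $H=h^{-1}([0,\infty))$ and arrange $(h\circ r)'>0$ for rays asymptotic to $H_\infty$, so $h$ increases toward $\tau$. The paper explicitly invokes ``the concavity of $h$'' and that ``$(h\circ r_2)(t)$ is concave down.'' With $h\circ\gamma$ \emph{concave}, the slope is non-\emph{increasing} and approaches $-\cos\angle(\sigma,\tau)$ from \emph{above}, giving $(h\circ\gamma)'(t)\ge -\cos\angle(\sigma,\tau)$ — the reverse of the inequality $(h\circ\gamma)'(t)\le-\delta_0$ you use. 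Integrating the correct inequality gives a \emph{lower} bound $T_u(\sigma)\ge h(u)/\delta_\sigma$, which is useless here. Geometrically this is the point: if $\angle_u(\sigma,\tau)<\pi/2$ the ray initially climbs further into the horoball, so there is no uniform pointwise rate of descent along the whole ray, and your bound $T_u(\sigma)\lesssim h(u)/\delta_0$ (which is independent of $\rho$) is not something one can prove.

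Because the ``uniform rate $\delta_0$'' is the engine of every step of your argument — the distance bound, the one-variable Lipschitz bound, and the two-variable bound — the whole proposal fails as written. The paper's proof avoids this in two ways that you should note: (i) for the distance bound it does \emph{not} differentiate $h\circ r_{u,\sigma}$; instead it compares $r_{u,\sigma}$ to the parallel ray $r_{[n],\sigma}$ inside the flat $E_\fd$ (where $h$ is exactly linear with slope $\le -\sin\epsilon$) using the CAT(0) fact that the distance between asymptotic rays is non-increasing and bounded by $d(u,[n])\lesssim\rho$; (ii) for the Lipschitz constant it splits into cases $\angle\gtrless (2b)^{-1}$, and in the small-angle case it extracts a descent-rate lower bound $-(h\circ r_2)'(t)\gtrsim 1/b$ only for $t\ge T_1$, using concavity and the already-established fact that $h\circ r_2$ has dropped by roughly $h(u)$ over $[0,T_1]$. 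That endpoint concavity argument is what replaces your global uniform rate, and it is the reason the final Lipschitz constant picks up the factor $(\rho+1)^2$ rather than $(\rho+1)$; your closing remark that the extra factor needs ``slightly more careful accounting'' understates it — without the case split and the post-$T_1$ concavity bound there is no correct accounting to do.
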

\begin{proof}
  We proceed similarly to the arguments in Section~4.5 of
  \cite{Yo}.  Let
  $$CX_\infty=(X_\infty \times [0,\infty))/(X_\infty \times 0)$$
  be the infinite cone over $X_\infty$.  We equip $CX_\infty$ with the
  Euclidean cone metric 
  $$d((\sigma_1,t_1),(\sigma_2,t_2))^2=t_1^2+t_2^2-2t_1t_2\cos \angle(\sigma_1,\sigma_2)$$
  so that the cone over an apartment in
  $X_\infty$ is isometric to Euclidean space.  For
  $\sigma\in X_\infty$, let $r_{x,\sigma}\from[0,\infty)\to X$ be the
  unit-speed geodesic ray based at $x$ that is asymptotic to $\sigma$,
  and let $e_x\from CX_\infty\to X$ be the ``exponential map''
  $$e_x(\sigma,t)=r_{x,\sigma}(t).$$
  Because $X$ is a CAT(0) space, this is a distance-decreasing map and
  if $x,x'\in X$, then
  \begin{equation}\label{eq:parallelRays}
    d(e_x(\sigma,t),e_{x'}(\sigma,t))\le d(x,x').
  \end{equation}

  Since $\tau$ is not in a proper join factor (by the standing assumptions), there is an $\epsilon>0$ such that
  $\fc\subset B_{\pi/2-\epsilon}(\tau)$.  Each point
  $\sigma\in \xop(\fc)$ is opposite to a point in $\fc$, so
  $\angle(\sigma,\tau)>\frac{\pi}{2}+\epsilon$.  By the concavity of
  $h$, it follows that for each such $\sigma$, the ray $r_{u,\sigma}$
  intersects $Z$ exactly once.

  Our first task is to show that $d(i_u(\sigma),u)\lesssim h(u)+\rho$
  for all $\sigma\in \cSr_u(\rho)$.  Let $T_u(\sigma)=d(u,i_u(\sigma))$,
  so that
  $$i_u(\sigma)=r_{u,\sigma}(T_u(\sigma)).$$
  Without loss of generality, we can take $u$ to be the basepoint
  $u=[e]$.  Let
  $$c=\max \{d([e],[n])\mid n\in N, d_N(n)\le 1\}.$$
  Then for any $n\in N$, we have $d([e],[n])\le cd_N(n)$, and if
  $\fd\in \cSc_{u}(\rho)$, then $d(u,E_\fd)\le c\rho$.

  Suppose that $\sigma\in \cSr_u(\rho)$ and that $\fd\in \cSc_{u}(\rho)$ is a
  chamber containing $\sigma$.  Let $n\in N$ be such that
  $E_\fd=[nA]$.  The geodesic $r_{[n],\sigma}$ is contained in the flat $E_\fd$,
  so
  $$h(r_{[n],\sigma}(t))=h([n])+t\cos \angle(\sigma,\tau)\le h([n])- t\sin \epsilon.$$
  Since $r_{[n],\sigma}$ and $r_{u,\sigma}$ are asymptotic to the same
  point, the distance $d(r_{u,\sigma}(t),r_{[n],\sigma}(t))$ is a
  non-increasing function of $t$, and 
  $$|h(r_{u,\sigma}(t))-h(r_{[n],\sigma}(t))|\le d(u,[n])\le c\rho$$
  for all $t\ge 0$.  It follows that 
  $$T_u(\sigma)\le \frac{h(u)+c\rho}{\sin \epsilon}\lesssim
  h(u)+\rho.$$
  as desired.  Let $b:=\frac{1+c\rho}{\sin \epsilon}$; then
  $b \lesssim \rho+1$ and $T_u(\sigma)\le b h(u)$ for all $u$ such
  that $h(u)\ge 1$.

  Next, we bound the Lipschitz constant of $T_u$.  If
  $\sigma_1, \sigma_2\in \cSr_u(\rho)$ and
  $\angle(\sigma_1,\sigma_2)\ge \frac{1}{2b},$
  then 
  $$\frac{|T_u(\sigma_1)-T_u(\sigma_2)|}{\angle(\sigma_1,\sigma_2)}\le
  \frac{b h(u)}{(2b)^{-1}}\lesssim b^2h(u).$$

  Otherwise, consider the case that
  $\angle(\sigma_1,\sigma_2)<\frac{1}{2b}.$ Let $r_i=r_{u,\sigma_i}$
  and $T_i=T_u(\sigma_i)$.  Without loss of generality, suppose
  $T_1\le T_2$, so that $h(r_1(T_1))=0$ and $h(r_2(T_1))\ge 0$.  We
  will show that $h(r_2(T_1))$ is small and that $(h\circ r_2)(t)$ is
  decreasing quickly at $t=T_1$.

  Since $X$ is CAT(0), we have
  $$d(r_1(T_1),r_2(T_1))\le b h(u) \angle(\sigma_1,\sigma_2)\le \frac{h(u)}{2}.$$
  It follows that
  $(h\circ r_2)(T_1)\le b h(u) \angle(\sigma_1,\sigma_2)$.
  Furthermore, since $(h\circ r_2)(t)$ is concave down, we have
  $$-(h\circ r_2)'(t)> \frac{h(u)-(h\circ r_2)(T_1)}{T_1}\ge\frac{h(u)}{2bh(u)} \gtrsim \frac{1}{b}$$
  for all $t\ge T_1$.  Consequently,
  \begin{align*}
    T_2-T_1 &\le \frac{(h\circ r_2)(T_1)}{-(h\circ r_2)'(T_1)}\\
            &\lesssim \frac{b h(u) \angle(\sigma_1,\sigma_2)}{b^{-1}} \\
            &\lesssim  b^2 h(u) \angle(\sigma_1,\sigma_2),
  \end{align*}
  so $\Lip T_u\lesssim b^2 h(u)$.
  
  Thus, for all $\sigma_1,\sigma_2\in \cSr_u$, if $r_i$ and $T_i$ are as
  above, we have
  \begin{align*}
    d(i_u(\sigma_1),i_u(\sigma_2))&=d(r_1(T_1),r_2(T_2))\\
    &\le d(r_1(T_1),r_2(T_1))+|T_2-T_1|\\
    &\lesssim bh(u) \angle(\sigma_1,\sigma_2)+ b^2 h(u)\angle(\sigma_1,\sigma_2)\\
    &\lesssim (\rho+1)^2 h(u)\angle(\sigma_1,\sigma_2),
  \end{align*}
  so $\Lip(i_u)\lesssim (\rho+1)^2  h(u)$.  
  
  Finally, we prove \eqref{eq:twoVariablePushing}.  Suppose that
  $u_1,u_2\in h^{-1}([1,\infty))$ and
  $\sigma\in \cSr_{u_1}(\rho)\cap \cSr_{u_2}(\rho)$.  Let
  $r_i=r_{u_i,\sigma}$ and $T_i=T_{u_i}(\sigma)$ and suppose that
  $T_1\le T_2$.  By the convexity of $h\circ r_1$, we know that
  $(h\circ r_1)'(t)<0$ for all $t\ge T_1$.  In fact,
  \begin{align*}
    (h\circ r_1)'(t)&\le \frac{(h\circ r_1)(T_1)-(h\circ r_1)(0)}{T_1}\\ 
                    &\le \frac{-h(u_1)}{bh(u_1)}= -b^{-1}
  \end{align*}
  for all $t\ge T_1$.  

  Since $X$ is a CAT(0) space, we have
  $$d(r_1(t),r_2(t))\le d(u_1,u_2)$$
  for all $t\ge0$.  Then
  \begin{align*}
    h(r_2(T_2))&\le h(r_1(T_2))+d(u_1,u_2)\\ 
               &\le-(T_2-T_1)b^{-1}+d(u_1,u_2).
  \end{align*}
  But $h(r_2(T_2))=0$, so $|T_2-T_1|\le bd(u_1,u_2)$.
  Therefore, 
  $$d(i_{u_1}(\sigma),i_{u_2}(\sigma))\le d(u_1,u_2)+|T_1-T_2|\lesssim
  b d(u_1,u_2).$$

  Thus, if
  $\sigma_1,\sigma_2\in \cSr_{u_1}(\rho)\cap \cSr_{u_2}(\rho)$, then
  $$d(i_{u_1}(\sigma_1),i_{u_2}(\sigma_2))\lesssim b d(u_1,u_2)+b^2
  \min\{h(u_1),h(u_2)\}\cdot \angle(\sigma_1,\sigma_2)$$
  as desired.
\end{proof}

\subsection{Finding opposite flats}\label{sec:oppositeFlats}
We will need to show that the shadows of points are highly
connected. To that end we will show in this section that shadows of
points contain the boundaries of many flats.  First, we claim that
$\xop$,  the set of chambers opposite to $\fc$, contains many flats.

\begin{defn}
  If $X$ is a symmetric space of rank $k$ and $\fc\subset X_\infty$ is
  a chamber, and $E\subset X$ is a $k$--flat, then we say that $E$ is
  \emph{opposite} to $\fc$ if every chamber in the boundary of $E$ at infinity, $E_\infty\subset X_\infty$, is opposite
  to $\fc$.
\end{defn}

\begin{lemma}\label{op}
  If $\fc\subset X_\infty$ is a chamber, then there is some $k$--flat
  $E$ such that $E$ is opposite to $\fc$.
\end{lemma}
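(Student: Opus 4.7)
The plan is to construct the desired flat as a generic translate $g E_0$ of the standard flat $E_0 = [A]$, by showing that the set of ``good'' translations $g$ — those for which every chamber of $(gE_0)_\infty$ is opposite to $\fc$ — is a finite intersection of open dense subsets of $G$, hence non-empty.

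First I will recall how to parametrize the chambers of an apartment. The apartment $(E_0)_\infty$ contains exactly $|W|$ chambers, where $W = N_K(A)/M$ is the Weyl group, and the $W$-action is simply transitive on them with base chamber $\fc$. Choosing representatives $\tilde w \in N_K(A)$ of each $w \in W$, the chambers of $(E_0)_\infty$ are $\{\tilde w \cdot \fc : w \in W\}$, and hence the chambers of $(gE_0)_\infty$ are
$$\{\, g\tilde w \cdot \fc : w \in W \,\}.$$
Identifying the space of all chambers of $X_\infty$ with $G/P$ (base point $\fc = P/P$) and using the description $\xop(\fc) = N \cdot \fc^* = N w_0 \cdot \fc$ from the standing assumptions, the condition ``$E = gE_0$ is opposite to $\fc$'' translates to
$$g\tilde w \in N w_0 P \quad \text{for every } w \in W, \qquad \text{i.e.,} \qquad g \in \bigcap_{w \in W} N w_0 P \,\tilde w^{-1}.$$

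Next I will verify non-emptiness of this intersection. The big Bruhat cell $N w_0 P$ is the preimage in $G$ of $\xop(\fc) \subset G/P$, and $\xop(\fc)$ is open and dense in the flag manifold $G/P$ (the only open Schubert cell). Since the projection $G \to G/P$ is open and continuous, $N w_0 P$ is open and dense in $G$; each right translate $N w_0 P \,\tilde w^{-1}$ is therefore also open and dense. As $W$ is finite and $G$ is a connected Lie group (equivalently, an irreducible smooth manifold), the Baire category theorem gives that
$$\bigcap_{w \in W} N w_0 P\,\tilde w^{-1}$$
is open, dense and in particular non-empty. Any $g$ in this intersection yields a $k$--flat $E = gE_0$ each of whose ideal chambers lies in $\xop(\fc)$, so $E$ is opposite to $\fc$ as required.

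I do not expect any serious obstacle; the only subtle point is making sure that the chambers of $(gE_0)_\infty$ are correctly enumerated by $W$ and that the condition ``opposite to $\fc$'' genuinely coincides with membership in the open Bruhat cell $N w_0 P/P$, which is precisely the content of the formula $\xop(\fc) = N\cdot \fc^*$ already established in the preliminaries.
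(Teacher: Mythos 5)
Your proof is correct and rests on the same key fact as the paper's: the big Bruhat cell (equivalently, the set $\xop(\fc)$ of chambers opposite to $\fc$) is open and dense, so a finite intersection of its translates indexed by the chambers of a single apartment is nonempty. The paper phrases this dually---fixing a flat $F$ and showing that $\bigcap_i \xop(\fc_i)\subset G/P$ is nonempty, then translating $F$ by any $g$ with $g\fc'=\fc$---whereas you fix $\fc$ and intersect the open dense sets $Nw_0P\tilde w^{-1}$ directly in $G$; these are equivalent formulations of the same argument (and Baire is more than you need here, since a finite intersection of open dense sets is open dense by elementary reasoning).
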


\begin{proof} Let  
  the stabilizer of $\fc$ in $G$ be the minimal parabolic subgroup $P=NAM$
  and identify the set of (maximal) chambers in $X_\infty$ 
  with the homogeneous space $G/P$. Recall that $\xop(\fc)\subset X_\infty$ denotes
  the set of chambers opposite to $\fc$ and if $\fc^*$ is one such
  chamber, then $\xop(\fc)=N\cdot \fc^*$.  Under the identification of the set of chambers in $X_\infty$
  with $G/P$, we find $\xop(\fc)=N w^*P$, where $w^*$ is the longest
  element in the Weyl group of $X$. This orbit is the big cell in the
  Bruhat decomposition of $G$ and its complement has measure zero.  In
  fact, its complement has codimension at least $1$ (see \cite{Wa},
  Prop. 1.2.4.9 or \cite{He}, Ch. IX, Cor. 1.8), so we can view
  $\xop(\fc)$ as an open submanifold of $G/P$ whose complement has
  codimension at least $1$.

  Let $F$ be a maximal flat in $X$. Its boundary at infinity $F_\infty\subset X_\infty$ consists of finitely many chambers, say
$\fc_1,\fc_2,\ldots,\fc_m$. The set $\xop(\fc_i)$ of chambers that are opposite to $\fc_i$ is an open submanifold
whose complement has 
codimension at least $1$ for all $i=1,\ldots,m$. Thus the set of all chambers in $X_\infty$ simultaneously opposite to all  chambers of $F$
is the intersection $\cap_{i=1}^m \xop(\fc_i)$. This is an open
submanifold of $G/P$ whose complement has codimension at least $1$. In
particular, there
is a chamber $\fc'$ opposite to $F$. If we write the given chamber $\fc$ as $\fc=g\fc'$ for some $g\in G$, then $E=gF$ satisfies the claim of the Lemma.
\end{proof}

\begin{rem} The above proof shows that if a chamber $\fc$ is opposite
  to a fixed flat $E$, then so are all chambers  in an
  open neighborhood of $\fc$. Similarly, all flats of the form $hE$
  for $h$ in an open neighborhood of $e\in G$ are opposite to a fixed
  $\fc$.
\end{rem}

We use Lemma~\ref{op} to prove lower bounds on the filling invariants of
horospheres.
\begin{prop}\label{prop:lowerBoundsHorospheres}
  For some $c>0$ and for all sufficiently large $r$, there is a
  Lipschitz map $\alpha\from S^{k-1}\to Z$ with $\Lip(\alpha)\sim r$
  such that any Lipschitz extension $\beta\from D^{k}\to Z$
  satisfies
  $$\vol \beta\ge e^{cr}.$$
\end{prop}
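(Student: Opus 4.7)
The plan is to construct $\alpha$ as the image under the pushing map $i_u$ of an apartment at infinity opposite to $\fc$, and then to deduce the exponential $Z$--filling lower bound via a Stokes-type argument comparing $\alpha$ to an auxiliary cap lying inside a flat. For the construction, I would first use Lemma~\ref{op} to obtain a flat $E'\subset X$ opposite to $\fc$, so that $E'_{\infty}$ is a full apartment contained in $\xop(\fc)$, hence a topological $(k-1)$--sphere. Choosing $a\in A_+$ asymptotic to $\tau$ with $h\circ a = h+r$ (available because $\tau\in\fc\subset (E_0)_\infty$ is fixed pointwise by $A$), I would replace $E'$ by $E:=aE'$, which remains opposite to $\fc$ since $A\subset P$ stabilizes $\fc$. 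Because $\tau\notin E_\infty$, the restriction $h|_E$ is concave with a finite maximum of size $\sim r$ attained at some $u\in E$, and I would set $\alpha:=i_u|_{E_{\infty}}\colon S^{k-1}\to Z$.

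For the Lipschitz estimate, I would check that $E_{\infty}\subset\cSr_u(\rho_0)$ for some universal constant $\rho_0$: for each chamber $\fd\subset E_{\infty}$, the flat $E_\fd$ lies in the parallel set of $\fd$ and hence is parallel to $E$, so $d(u,E_\fd)=d(a^{-1}u,E_{a^{-1}\fd})$ reduces to the distance from a fixed point $a^{-1}u\in E'$ to one of finitely many flats $\{E_\fe:\fe\subset E'_\infty\}$, and is bounded by a constant depending only on $X$ and $E'$. Lemma~\ref{lem:compare} then yields the shadow inclusion with $\rho_0=O(1)$, and Lemma~\ref{lem:lipschitzPushing} gives $\Lip(\alpha)\lesssim h(u)\sim r$; the matching lower bound follows because the image $\alpha(E_{\infty})$ lies on the topological sphere $E\cap Z$, whose $X$--diameter is $\sim r$. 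For the filling lower bound, note that $\alpha$ bounds in $X$ the topological $k$--ball $C:=E\cap\overline{H}\subset E$, whose $X$--volume is only polynomial in $r$; for any Lipschitz $Z$--filling $\beta\colon D^k\to Z$ of $\alpha$, the chain $\beta-C$ is a closed $k$--cycle in the contractible space $X$.

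The exponential lower bound would then be obtained by constructing a closed $k$--form $\eta$ on $X$ satisfying $\bigl|\int_C\eta\bigr|\gtrsim e^{cr}$ while $\|\eta|_Z\|_{\infty}=O(1)$, so that Stokes' theorem applied to the cycle $\beta-C$ yields $\vol(\beta)\gtrsim\bigl|\int_C\eta\bigr|\gtrsim e^{cr}$. The form $\eta$ is built from the exponential Jacobian of the Busemann flow toward $\tau$: flats containing $\fc$ in their boundary carry an $A_+$--invariant volume form whose density relative to the Riemannian volume on $X$ grows like $e^{ch}$, so the cap contributes exponentially to the integral while the trace on $Z$ remains uniformly bounded. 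The principal obstacle is this last step: constructing $\eta$ explicitly (or giving an equivalent direct coarse-geometric volume comparison) requires input from the representation theory of the minimal parabolic $P=NAM$ on $\fn$ and the positive $A$--weights governing the exponential growth of horoball volumes, which is more delicate than the shadow-and-pushing arguments developed thus far.
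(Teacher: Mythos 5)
Your construction of $\alpha$---pushing the boundary $E_\infty$ of an opposite flat into $Z$ via $i_u$---and the verification $\Lip\alpha\sim r$ via Lemmas~\ref{lem:compare} and~\ref{lem:lipschitzPushing} match the paper's construction in all essentials. The difference, and the gap, lie entirely in how you obtain the exponential filling lower bound.

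The paper does not attempt to produce the exponential growth from scratch; instead it invokes a known estimate on the $k$--th divergence of $X$ due to Leuzinger \cite{LeuzingerCorank}: the $(k-1)$--sphere $\alpha_0$ of radius $r$ about a point $x$ in a flat $F$ has $\tfrac{1}{2}$--avoidant filling volume at least $e^{c_0 r}$, i.e.\ every Lipschitz filling of $\alpha_0$ in $X$ whose image avoids $B_{r/2}(x)$ has volume $\ge e^{c_0 r}$. Since $d(x,Z)\sim r$, any filling $\beta$ of $\alpha$ inside $Z$ automatically avoids $B_{r/2}(x)$; attaching the polynomial-volume annulus in the flat between $\alpha_0$ and $\alpha$ then transfers the avoidant bound to $\beta$. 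You never invoke this result, and your alternative route through a calibration form $\eta$ is---as you yourself concede---not carried out. Since that construction is the entire content of the lower bound, the proof is incomplete.

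Even as a sketch, the calibration has a directional inconsistency worth flagging. The cap $C=E\cap\overline{H}$ lies in a flat $E$ \emph{opposite} to $\fc$, whereas the exponential Jacobian you appeal to lives on the flats $E_{\fd}$ that \emph{contain} $\fc$ at infinity. At a point $y\in C$ deep in the horoball these are genuinely different $k$--flats through $y$, with distinct tangent $k$--planes, so a closed $k$--form designed to be exponentially large on $T_y E_{\fc^*_y}$ is not obviously large when paired with $T_yC\subset T_yE$; nor do these planes become asymptotically aligned as $h(y)\to\infty$. You would either need to argue this alignment (which fails in general) or design $\eta$ by a different mechanism, while simultaneously keeping $\|\eta|_Z\|_\infty$ uniformly bounded and $d\eta=0$. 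Given that Leuzinger's divergence estimate already encodes exactly the exponential obstruction you need, and that the constraint $\beta(D^k)\subset Z$ translates cleanly into the $\tfrac{1}{2}$--avoidance hypothesis, citing that result is both shorter and more robust than the Stokes route you propose.
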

\begin{proof}
  Our bound is based on an estimate of the $k$th divergence of $X$ due
  to Leuzinger \cite{LeuzingerCorank}.  Leuzinger showed that there
  are $c_0>0$ and $R>1$ such that if $F$ is a flat in $X$, $x\in F$ and $r>R$,
  then the $(k-1)$--sphere centered at $x$ with radius $r$ in $F$ has
  exponentially large $\frac{1}{2}$--avoidant filling volume.  That is, if
  $\alpha_0\from S^{k-1}\to F$ is the sphere of radius $r$ centered at
  $x$, then any extension $\beta_0\from D^k\to X$ whose image avoids
  the ball $B_{r/2}(x)$ has exponentially large volume, i.e.,
  $\vol(\beta_0)\ge e^{c_0r}$.

  To use this result, we find a flat that intersects $Z$ in a sphere.
  Let $F_0$ be a flat opposite to $\fc$ and let $x_0$ be a point in
  $F_0$.  (We can take $x_0$ to be the point on $F_0$ where $h$ is
  maximized, but this is not necessary.)  The boundary of $F_0$ at
  infinity, $(F_0)_\infty$, consists of finitely many chambers, so
  there is some $\rho>0$ such that
  $(F_0)_\infty \subset \cSr_{x_0}(\rho)$.  Choose $r$ so that $r>R$
  and let $x\in X$ be such that $h(x)=r$.  Let $p\in P$ be a group
  element such that $px_0=x$ and let $F=pF_0$.

  We identify $F_\infty$ with $S^{k-1}$ and define
  $\alpha\from S^{k-1}\to Z$ by letting $\alpha(\sigma)=i_x(\sigma)$
  for all $\sigma \in F_\infty$.  By Lemma~\ref{lem:lipschitzPushing},
  this is a Lipschitz map with $\Lip(\alpha)\sim r$.  The image of
  $\alpha$ is the intersection $F\cap Z$, and we claim that $\alpha$
  has exponentially large filling area in $Z$.

  Let $\alpha_0\subset F$ be the sphere centered at $x$ with radius
  $r$, as in Leuzinger's bound.  The spheres $\alpha_0$ and $\alpha$
  both lie in the flat $F$, and since $d(x,Z)=r$, $\alpha_0$ is on the
  inside of $\alpha$.  If $\beta\from D^{k}\to Z$ is an extension of
  $\alpha$, then we can attach an annulus $A$ of volume
  $\vol A\lesssim r^k$ to $\beta$ to construct an extension $\beta_0$
  of $\alpha_0$.  This extension lies outside $B_{r/2}(x)$, so by
  Leuzinger's bound, $\vol \beta_0\gtrsim e^{c_0r}$.  Then
  $\vol \beta=\vol \beta_0-\vol A$, and if $r$ is sufficiently large,
  we have $\vol \beta\ge e^{c_0r/2}$, as desired.
\end{proof}

In order to prove upper bounds, we will need a few more flats.  
As in Section~\ref{sec:shadows}, for $x\in X$, let $C_x\subset X$ be the Weyl
chamber in $X$ based at $x$ and asymptotic to $\fc$.
\begin{lemma}\label{lem:oppositeFlats}
  If $x\in X$ and $\fc$ is a chamber of $X_\infty$, then there is some
  $x'\in C_x$ such that $d(x,x')\lesssim 1$ and some chamber
  $\fd\subset \cSc_{x'}$ such that for all $\fe\in \cSc_{x}$:
  \begin{enumerate}
  \item The chambers $\fe$ and $\fd$ are opposite.
  \item The flat $E_{\fe,\fd}$ is opposite to $\fc$.
  \item The boundary at infinity of $E_{\fe,\fd}$ is contained in $\cSc_{x'}$.
  \end{enumerate}
\end{lemma}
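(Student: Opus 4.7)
Without loss of generality take $x = [e]$. The plan is to choose $x' = [a]$ for a bounded element $a \in A_+$, so that $x' \in C_x$ and the shadow $\cSc_{x'}$ strictly enlarges $\cSc_x$ by an exponential factor, and then select $\fd \in \cSc_{x'}$ generically so that conditions (1), (2), (3) hold for every $\fe \in \overline{\cSc_x}$.

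First, fix $a \in A_+$ with $\|\log a\| = C_0$ for a universal constant $C_0 > 0$ to be determined, and set $x' = [a]$. Then $x' \in C_x$ and $d(x,x') = C_0 \lesssim 1$. Since $\Ad(a^{-1})$ contracts each positive root space $\mathfrak g_\alpha$ by $e^{-\alpha(\log a)} < 1$, a direct computation in the spirit of Lemma~\ref{lem:dil} shows $\cSc_{x'} \supset \cSc_x(R)$ for some $R = R(C_0)$ that grows exponentially in $C_0$. Thus by enlarging $C_0$ we can make $\cSc_{x'}$ contain arbitrarily large prescribed shadows of $x$.

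Next, select $\fd$. For each fixed $\fe \in \xop(\fc)$, let $O_\fe \subset \xop(\fc)$ be the set of chambers $\fd$ satisfying both (1) and (2). Openness of $O_\fe$ follows from the remark after Lemma~\ref{op}, and nonemptiness reduces to finding an apartment in $X_\infty$ that contains $\fe$ and all of whose other chambers are opposite to $\fc$ — then one takes $\fd$ to be the chamber opposite to $\fe$ in that apartment. Adapting the Bruhat-decomposition argument from the proof of Lemma~\ref{op} shows that such apartments form an open dense subfamily of the apartments through $\fe$, so $O_\fe$ is open dense in $\xop(\fc)$. The globally good set $U = \bigcap_{\fe \in \overline{\cSc_x}} O_\fe$ is the complement in $\xop(\fc)$ of the projection to the second factor of the set $\{(\fe,\fd) \in \overline{\cSc_x} \times \xop(\fc) : \fd \notin O_\fe\}$, which is closed by compactness of $\overline{\cSc_x}$, so $U$ is open. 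Nonemptiness of $U$ follows from the semi-algebraic nature of the bad set: by Tarski--Seidenberg and a codimension count on each slice $O_\fe^c$, the projection above is a proper subset of $\xop(\fc)$, so its complement $U$ is open and nonempty.

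Finally, pick $\fd \in U \cap \cSc_{x'}$; the intersection is nonempty once $C_0$ is taken large enough that $\cSc_{x'}$ meets $U$. Condition (3) then follows from continuity: the map $\fe \mapsto \partial_\infty E_{\fe,\fd}$ is continuous on the compact set $\overline{\cSc_x}$, so its image is contained in some bounded shadow $\cSc_x(R')$ with $R' = R'(\fd)$, and a further enlargement of $C_0$ to ensure $R(C_0) > R'$ places this whole image inside $\cSc_{x'}$. The chief obstacle is the second step, showing that $U$ is nonempty: it requires combining the genericity provided by the adaptation of Lemma~\ref{op} with a careful semi-algebraic or dimension-theoretic argument about how the bad slices $O_\fe^c$ assemble as $\fe$ varies over the compact family $\overline{\cSc_x}$.
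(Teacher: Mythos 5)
Your approach is genuinely different from the paper's, but it has a real gap at exactly the point you flag as the chief obstacle, and the gap is not one that the sketched ``semi-algebraic or dimension-theoretic argument'' can close. You want to show $U=\bigcap_{\fe\in\overline{\cSc_x}}O_\fe$ is nonempty by arguing that the bad set $B'=\{(\fe,\fd):\fd\notin O_\fe\}\subset\overline{\cSc_x}\times\xop(\fc)$ has codimension $\ge 1$ in each slice and hence projects to a proper subset of $\xop(\fc)$. This inference is false in general: a semi-algebraic set of codimension $1$ in a product can project onto the entire second factor (e.g.\ the diagonal in $[0,1]^2$). Concretely, with $d=\dim\xop(\fc)=\dim\overline{\cSc_x}$, the codimension bound only gives $\dim B'\le 2d-1$, and $2d-1\ge d$, so nothing prevents $\pi_2(B')=\xop(\fc)$. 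The conclusion is true---the paper does produce such a $\fd$---but not for the reason you give, and Tarski--Seidenberg alone supplies definability, not a dimension drop under projection. There is also a mild circularity in the last step ($C_0$ depends on $\fd$ which is chosen in $\cSc_{x'}$ which depends on $C_0$); that one is fixable by choosing $\fd\in U$ first and enlarging $C_0$ afterward, but it is worth saying so explicitly.

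The paper avoids the problematic uniformity question entirely by a dynamical/constructive argument. It first finds a single flat $E_{\fd_0,\fc^*}$ opposite to $\fc$, uses the remark after Lemma~\ref{op} to get an open neighborhood $U$ of $\fc^*$ such that $E_{\fd_0,\fc'}$ is opposite to $\fc$ for every $\fc'\in\closure(U)$, and then applies Lemma~\ref{lem:dil} with $a=\exp(-tV)$, $V$ in the Weyl chamber of $\fc$, to contract $\cSc_x$ into $U$. Conjugating back, $\fd:=a^{-1}\fd_0$ works simultaneously for every $\fe\in\cSc_x$. Finally, compactness of $V=\bigcup_{\fc'\in\cSc_x}(E_{\fc',\fd})_\infty$ and a second application of Lemma~\ref{lem:dil} produce $x'$ with $d(x,x')\lesssim 1$ absorbing all these boundaries. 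The contraction step is the mechanism you are missing: it converts the single-point openness guarantee into a uniform statement over the whole shadow, which is precisely what a bare codimension count cannot do.
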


\begin{proof} Without loss of generality we can assume 
  that $x=[e]$ and let $E=E_{x}=[A]$, and let
  $\fc^*=\fc^*_{x}$.

  The chambers in $\cSc_{x}$ are all close to $\fc^*$, so we
  first choose $\fd_0$ so that the flat $E_{\fd_0,\fc^*}$ spanned by
  $\fd_0$ and $\fc^*$ is opposite to $\fc$. To that end let $E_0$ be a flat
  opposite to $\fc$, and let $n\in N$ be such that
  $\fc^* \subset (nE_0)_\infty$.  If $\fd_0$ is the chamber opposite
  to $\fc^*$ in $nE_0$, then $nE_0=E_{\fd_0,\fc^*}$ and it is
  opposite to $n\fc=\fc$.  

  By the above remark, there is an open neighborhood $U$ of $\fc^*$ so
  that for any $\fc'\in \closure(U)$, the flat $E_{\fd_0,\fc'}$ is also opposite
  to $\fc$.  We will use Lemma \ref{lem:dil} to find an element
  $a\in A$ such that $a$ sends $\cSc_{x}$ into $U$.  

  Let $0<r<1$ be such that $\cSc_{x}(r)\subset U$.  Let $\fa$ be
  the Lie algebra of $A$ and let $V\in \fa$ be a unit vector in the
  open Weyl chamber corresponding to $\fc$.  By Lemma~\ref{lem:dil},
  there is a $t>0$ such that $t \lesssim -\log r$ and 
  $$\cSc_{\exp(-tV)}=\exp(-tV) \cSc_{x}\subset \cSc_{x}(r).$$
  Let $a:=\exp(-tV)$ and let $\fd:=a^{-1}\fd_0$.  Then for any
  $\fc'\in \cSc_{x}$, we have $a\fc'\in U$ and thus
  $E_{a\fc',\fd_0}$ is opposite to $\fc$.  It follows that
  $a^{-1}E_{a\fc',\fd_0}=E_{\fc',\fd}$ is opposite to $\fc$.

  Finally, we choose $x'$.  The union
  $$V=\bigcup_{\fc'\in \cSc_{x}}(E_{\fc',\fd})_\infty$$
  is contained in a compact set and consists of chambers opposite to $\fc$, so
  $V\subset \cSc_x(r')$ for some $r'$.  By Lemma~\ref{lem:dil}, there is
  some $x'\in C_x$ such that $d(x,x')\lesssim \log(r'+1)$ and
  $V\subset \cSc_{x'}$.
\end{proof}

\subsection{$(k-2)$--connectivity at infinity}\label{sec:infConnect}

Next, we show that $\xop(\fc)$ is highly connected.  First, we
consider spheres that lie in a single shadow.  Let
$\cSr_{x}=\bigcup_{\fd\in \cSc_{x}}\fd \subset X_\infty$ be the subset of $X_\infty$ covered
by the chambers in  $\cSc_{x}$   as in
Section~\ref{sec:shadows}.  The following lemma is an analogue of
Lemma~4.17 of \cite{Yo}.  Recall that $C_x$ is the Weyl chamber based
at $x$ and asymptotic to $\fc$.
\begin{lemma}\label{lem:connectedShadows}
  If $x\in X$, then there is some
  $x'\in C_x$ such that $d(x,x')\lesssim 1$, $\cSc_{x}\subset
  \cSc_{x'}$, and $\cSr_{x}$ is $(k-2)$--connected
  inside $\cSr_{x'}$.

  That is, if $\alpha\from S^m\to \cSr_{x}$ is Lipschitz and if
  $m\le k-2$, then there is an extension
  $\beta\from D^{m+1}\to \cSr_{x'}$ such that
  $\Lip \beta \lesssim \Lip \alpha+1$.
\end{lemma}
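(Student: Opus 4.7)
My plan is to invoke Lemma~\ref{lem:oppositeFlats} and build the filling by geodesic coning inside the resulting family of apartments.

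First I would apply Lemma~\ref{lem:oppositeFlats} to produce $x'\in C_x$ with $d(x,x')\lesssim 1$ and a chamber $\fd\subset \cSc_{x'}$ such that for every $\fe\in\cSc_x$ the chambers $\fe$ and $\fd$ are opposite and the apartment $A_\fe:=(E_{\fe,\fd})_\infty$ satisfies $A_\fe\subset \cSr_{x'}$. To simultaneously guarantee $\cSc_x\subset\cSc_{x'}$, I would move $x'$ slightly further along $C_x$; Lemma~\ref{lem:dil} shows that this shrinks the function $\rho_{\bullet}$ by a factor less than $1$, so at bounded extra cost we can ensure $\rho_{x'}(\fe)<1$ for every $\fe\in\cSc_x$.

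The union
\[
V := \bigcup_{\fe\in\cSc_x}A_\fe
\]
lies inside $\cSr_{x'}$ and is a family of $(k-1)$-apartments (each an $S^{k-1}$ in the Tits metric) of the spherical building $X_\infty$, all containing the common chamber $\fd$. Topologically $V$ is a wedge-like union glued along $\fd$, hence $(k-2)$-connected, and $\cSr_x\subset V$.

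I would then fix a point $\sigma_0$ in the interior of $\fd$ and define $\beta\colon D^{m+1}\to V$ by coning. Viewing $D^{m+1}$ as the cone $CS^m$ with apex $\sigma_0$, for $(t,s)\in[0,1]\times S^m$ let $\beta(t,s)$ be the point at parameter $t$ on the Tits geodesic from $\sigma_0$ to $\alpha(s)$ inside an apartment $A_{\fe}$ containing $\alpha(s)$. Because $X_\infty$ is CAT$(1)$, a geodesic of length less than $\pi$ is unique and independent of the apartment containing its endpoints, so $\beta$ is unambiguous away from the antipode set of $\sigma_0$. In the radial direction $\beta$ moves at unit speed along a Tits geodesic of length at most $\pi$, and the angular direction is controlled by $\Lip\alpha$ (geodesic retraction of $S^{k-1}$ to an interior point is $1$-Lipschitz), which yields $\Lip\beta\lesssim\Lip\alpha+1$.

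The main obstacle is handling points $\sigma\in\cSr_x$ that are antipodal to $\sigma_0$ in some apartment $A_\fe$; the antipode set is the chamber opposite to $\fd$ in $A_\fe$, hence $\alpha$ could genuinely cross it. My proposed fix is a two-stage homotopy: first use the opposition involution of each apartment $A_\fe$ (which sends $\fe$ isometrically onto $\fd$) to push $\alpha$ into $\fd$, then contract within $\fd$ to $\sigma_0$. Compatibility of the first stage across overlapping apartments is the delicate point, and reduces to the combinatorial fact that the opposition map $\fe\to\fd$ in the building depends only on the relative position of $\fe$ and $\fd$, not on the ambient apartment.
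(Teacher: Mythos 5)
Your use of Lemma~\ref{lem:oppositeFlats} to produce $x'$, $\fd$, and the family of apartments $A_\fe=(E_{\fe,\fd})_\infty\subset\cSr_{x'}$ is exactly the right starting point and matches the paper. The gap is in the contraction step. You cone $\alpha$ directly from a point $\sigma_0\in\interior\fd$, but $\alpha$ can pass through (or arbitrarily near) points opposite to $\sigma_0$, and neither of your remedies closes the resulting hole. First, the assertion that geodesic retraction toward an interior point is $1$--Lipschitz is a CAT$(0)$ fact; in the CAT$(1)$ space $X_\infty$ the contraction $\sigma\mapsto$ (point at fraction $t$ of the geodesic $[\sigma_0,\sigma]$) has angular Jacobian $\sin(t\,d)/\sin(d)$ at distance $d$ from $\sigma_0$, which blows up as $d\to\pi$. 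So even when $\alpha$ misses the antipode of $\sigma_0$, the Lipschitz constant of the cone degenerates if $\alpha$ gets close to it, and you cannot conclude $\Lip\beta\lesssim\Lip\alpha+1$. Second, your proposed fix via the opposition involution is not a homotopy: the opposition map is a discrete isometry from one chamber to the opposite one, it does not continuously ``push'' $\alpha$ into $\fd$, and for a generic point of $X_\infty$ the antipode genuinely depends on the chosen apartment, so even as a set map it is ill-defined off the chamber $\fe$.

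The paper avoids both problems with one move you have not used: a simplicial approximation. Since $m\le k-2$, one first replaces $\alpha$ by a Lipschitz-comparable map $\alpha'$ into the $m$--skeleton $\cSr_x^{(m)}$ (together with a homotopy $h_0$ from $\alpha$ to $\alpha'$). The barycenter $u$ of the top-dimensional chamber $\fd$ is opposite only to interior points of top-dimensional chambers, so $u$ is bounded away (by a constant depending only on the building type) from the entire $(k-2)$--skeleton and from its antipodal set in every apartment. Hence for every $v$, the geodesic from $u$ to $\alpha'(v)$ is unique, has length bounded away from $\pi$, and stays inside an apartment $A_\fe\subset\cSr_{x'}$; the resulting cone $h_1$ is Lipschitz with constant $\lesssim 1+\Lip\alpha'$. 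Concatenating $h_0$ and $h_1$ gives $\beta$. Without the restriction to the $m$--skeleton you would need an additional argument to control the antipodal locus, and your current sketch does not supply one.
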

\begin{proof} Let $\cSr_{x}^{(m)}$ be the $m$--skeleton of $ \cSr_{x}$ given by the Tits building structure on $X_\infty$.
  Let $\alpha'\from S^m\to \cSr_{x}^{(m)}$ be a simplicial
  approximation of $\alpha$, and let
  $h_0\from S^m\times [0,1]\to \cSr_x$ be the homotopy from $\alpha$
  to $\alpha'$.
  If $\alpha$ is Lipschitz, we can choose $\alpha'$ so that
  $\Lip \alpha'\lesssim \Lip\alpha$ and choose $h_0$ so that
  $\Lip h_0\lesssim \Lip \alpha+1$.

  We first contract $\alpha'$.  Let $x'$ and $\fd\in \cSc_{x'}$ be as
  in Lemma~\ref{lem:oppositeFlats} and let $u$ be the barycenter of
  $\fd$.  For any $v\in \cSr_x$, there is a flat $E$ that contains $u$
  and $v$ and whose boundary is contained in $\cSc_{x'}$, so the
  Tits geodesic from $u$ to $v$ lies in $\cSr_{x'}$.  Furthermore, if
  $v\in \cSr_x^{(m)}$, then $u$ and $v$ are not opposite to one
  another, so this geodesic is unique.

  Let $h_1\from S^m\times[0,1]\to \cSr_{x'}$ be the map which sends
  $v\times [0,1]$ to the geodesic between $\alpha'(v)$ and $u$.  This
  is a null-homotopy of $\alpha'$, and
  $$\Lip h_1\lesssim 1+\Lip \alpha'.$$
  The constant in the inequality depends on the distance between $u$
  and the $m$--skeleton of $X_\infty$.  By concatenating $h_0$ and
  $h_1$ we obtain a disc $\beta\from D^{m+1}\to \cSr_{x'}$ with
  boundary $\alpha$, and $\Lip \beta \lesssim \Lip \alpha+1$ as
  desired.
\end{proof}

Let $\Delta_Z$ be the infinite-dimensional simplex with vertex set
labeled by $Z$ and let $\langle z_0,\dots, z_k\rangle$ denote the
$k$--simplex with vertices $z_0,\dots,z_k$.  We will use
Lemma~\ref{lem:connectedShadows} to construct a map
$\Omega_\infty\from \Delta_Z \to X_\infty$ that sends each vertex
$\langle v\rangle$ to a direction in the shadow of $v$ and sends each
simplex $\delta$ to a simplex in the shadow of some point $x_\delta$.
If $\delta$ is a simplex of $\Delta_Z$, we denote its set of vertex
labels by $\V(\delta)\subset Z$, so that
$\V(\langle z_0,\dots, z_k\rangle)=\{z_0,\dots, z_k\}$.

\begin{lem}[{see \cite[4.16]{Yo}}] \label{lem:OmegaInfty}
  There is a cellular map
  $$\Omega_\infty\from \Delta_Z^{(k-1)}\to X_\infty,$$
  a constant $c>0$ depending on $X$, and a family of points $x_\delta\in X$,
  where $\delta$ ranges over the simplices of $\Delta_Z^{(k-1)}$.
  This map is $c$--Lipschitz, and for every $\delta$:
  \begin{enumerate}
  \item \label{it:xDeltaDists}
    $d(x_\delta,\V(\delta))\lesssim \diam \V(\delta)+1$ (and consequently,
    $h(x_\delta)\lesssim \diam \V(\delta)+1$).
  \item \label{it:OmegaShadow}$\Omega_\infty(\delta)\subset \cSr_{x_\delta}.$
  \item \label{it:OmegaOrdering} If $\delta'\subset \delta$, then
    $h(x_\delta)\ge h(x_{\delta'})$ and $x_\delta\in D_{x_{\delta'}},$
    where $D_x$ is a neighborhood of the chamber $C_x$ as in
    Section~\ref{sec:shadows}.
  \item $h(x_\delta)\ge 1$.
  \end{enumerate}
\end{lem}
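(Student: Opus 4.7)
I would prove this by induction on $m=\dim\delta$, constructing $\Omega_\infty|_\delta$ and $x_\delta$ simultaneously. For the base case $m=0$, fix once and for all a regular direction $\sigma_0\in \inter\fc$ and, for each $v\in Z$, let $x_{\langle v\rangle}$ be the point at a fixed distance $T=T(\sigma_0)$ from $v$ along the geodesic ray from $v$ asymptotic to $\sigma_0$, choosing $T$ so that $h(x_{\langle v\rangle})\ge 1$; this is possible because $h$ strictly increases along rays to $\sigma_0\in \fc\subset H_\infty$. Define $\Omega_\infty(\langle v\rangle)$ to be the barycenter of the opposite chamber $\fc^*_{x_{\langle v\rangle}}$, which lies in $\cSr_{x_{\langle v\rangle}}$ since $\fc^*_{x_{\langle v\rangle}}\in \cSc_{x_{\langle v\rangle}}$ (the case $n=e$ in the definition of the shadow).

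For the inductive step, let $\delta$ be an $m$-simplex with $\V(\delta)=\{v_0,\dots,v_m\}$, $1\le m\le k-1$, and assume the construction on the $(m-1)$-skeleton. By property~(1) applied to the faces $\delta'\subsetneq\delta$ and $\V(\delta')\subset\V(\delta)$, every $x_{\delta'}$ lies within a radius $r\lesssim\diam\V(\delta)+1$ of $x_{\langle v_0\rangle}$. Apply Lemma~\ref{lem:largeShadows} to $x_{\langle v_0\rangle}$ along a geodesic aimed at $\sigma_0$, with flow parameter $t\sim r+1$ chosen large enough that also $h(y_\delta)\ge \max_{\delta'}h(x_{\delta'})$: the output is a point $y_\delta$ with $\cSc_{x_{\delta'}}\subset \cSc_{y_\delta}$ and $y_\delta\in D_{x_{\delta'}}$ for every $\delta'\subsetneq\delta$. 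By induction $\Omega_\infty(\partial\delta)\subset \bigcup_{\delta'\subsetneq\delta}\cSr_{x_{\delta'}}\subset \cSr_{y_\delta}$. Now apply Lemma~\ref{lem:connectedShadows} to $y_\delta$, obtaining $x_\delta\in C_{y_\delta}$ at distance $O(1)$ from $y_\delta$ such that $\cSr_{y_\delta}$ is $(k-2)$-connected inside $\cSr_{x_\delta}$; since $m-1\le k-2$, the boundary map $\Omega_\infty|_{\partial\delta}$, viewed as a map into $\cSr_{y_\delta}$, extends to a Lipschitz map $\Omega_\infty|_\delta\from\delta\to\cSr_{x_\delta}$ with $\Lip(\Omega_\infty|_\delta)\le \Lip(\Omega_\infty|_{\partial\delta})+O(1)$.

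Properties~(1), (2), and~(4) are immediate from the construction: $d(x_\delta,v_0)\lesssim r+1\lesssim\diam\V(\delta)+1$, the image $\Omega_\infty(\delta)$ lies in $\cSr_{x_\delta}$ by the extension, and $h(x_\delta)\ge h(y_\delta)\ge 1$. For~(3), $h(x_\delta)\ge h(y_\delta)\ge h(x_{\delta'})$ because flowing further along $C_{y_\delta}$ (asymptotic to a point of $H_\infty$) only increases $h$, and $x_\delta\in D_{x_{\delta'}}$ follows from $y_\delta\in D_{x_{\delta'}}$ combined with $d(y_\delta,x_\delta)\lesssim 1$, after mildly enlarging the implicit constant in the definition of $D_x$ (which costs nothing in later arguments). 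Since the Lipschitz bound grows by at most an additive $O(1)$ at each of the $\le k$ inductive levels, the resulting $\Omega_\infty$ is $c$-Lipschitz for some $c=c(X)$. The main obstacle I anticipate is the bookkeeping in the inductive step: coordinating the two successive flows into $\fc$ provided by Lemmas~\ref{lem:largeShadows} and~\ref{lem:connectedShadows} so that the shadow-containment, the $D$-neighborhood condition, and the height monotonicity all hold simultaneously for \emph{every} subface $\delta'\subsetneq\delta$ after the final perturbation $y_\delta\mapsto x_\delta$.
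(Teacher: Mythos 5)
Your proposal is essentially the paper's own inductive construction: define $\Omega_\infty$ on vertices by sending $\langle z\rangle$ to the barycenter of $\fc^*_z$ (your $x_{\langle v\rangle}$ lies on the ray $r_{v,\sigma_0}$, which is contained in $E_v$, so $\fc^*_{x_{\langle v\rangle}}=\fc^*_v$ and the two definitions agree), then at each stage flow to a point whose shadow absorbs the shadows of the subfaces via Lemma~\ref{lem:largeShadows}, and extend across $\delta$ via Lemma~\ref{lem:connectedShadows}. The only cosmetic difference is that you flow from $x_{\langle v_0\rangle}$ while the paper flows directly from a vertex $z\in Z$; this is immaterial since the two base points are within bounded distance.

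On the one genuine concern you raise --- whether $x_\delta\in D_{x_{\delta'}}$ survives the perturbation $y_\delta\mapsto x_\delta$ --- you can do better than enlarging the constant in $D_x$. Since $x_\delta\in C_{y_\delta}$ and $C_{y_\delta}$ is exactly the set of points $r_{y_\delta,\sigma}(t)$ with $\sigma\in\fc$ and $t\ge 0$, it suffices to note that each such ray stays within distance $<1$ of $C_{x_{\delta'}}$: indeed $\sigma\in\fc\subset\bigl(C_{x_{\delta'}}\bigr)_\infty$, so if $p$ is the nearest-point projection of $y_\delta$ onto the convex set $C_{x_{\delta'}}$, then $r_{p,\sigma}$ stays in $C_{x_{\delta'}}$, and by convexity of the distance function in a CAT(0) space $d(r_{y_\delta,\sigma}(t),r_{p,\sigma}(t))\le d(y_\delta,p)<1$ for all $t\ge 0$. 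Hence $C_{y_\delta}\subset D_{x_{\delta'}}$ and in particular $x_\delta\in D_{x_{\delta'}}$ with the original constant $1$, no enlargement needed. Likewise the height monotonicity $h(x_\delta)\ge h(y_\delta)\ge h(x_{\delta'})$ follows cleanly because $h$ strictly increases along any ray toward $\fc\subset H_\infty$ (since $\angle_x(\sigma,\tau)\le\angle(\sigma,\tau)<\pi/2$ for all $\sigma\in\fc$), so the bookkeeping you worried about closes without ad hoc adjustments.
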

\begin{proof}
  We will construct $\Omega_\infty$ one dimension at a time using
  Lemma~\ref{lem:connectedShadows}.  Let $\tau_0$ be the barycenter of
  $\fc$ and let $\theta:=\angle(\tau_0,\tau)$.  By the standing
  assumptions, we have $\theta<\pi/2$.
  For $x\in X$, let $r_x\from [0,\infty)\to X$ be the geodesic ray
  based at $x$ and asymptotic to $\tau_0$, so that
  $$h(r_x(t))=h(x)+t\cos \theta$$ 
  for all $t$.

  We start by defining $\Omega_\infty$ on the vertices of $\Delta_Z$.
  For $z\in Z$, let $b_z$ be the barycenter of the chamber $\fc^*_z$
  and let
  $$\Omega_\infty(\langle z\rangle)=b_z$$
  and let $x_{\langle z\rangle}=r_z(\sec \theta)$.  This satisfies all
  of the desired conditions.

  Now, suppose by induction that $0<m\le k-1$, that we have defined
  $\Omega_\infty$ on $\Delta_Z^{(m-1)}$, and that we have defined
  $x_{\delta'}$ for every simplex $\delta'$ with $\dim \delta'<m$.
  Let $\delta$ be an $m$--simplex and let $z\in Z$ be one of its
  vertices.

  By part \ref{it:xDeltaDists} of the lemma, the points $x_{\delta'}$
  are contained in a ball $B_{z}(R)$ with $R\sim \diam \V+1$.  By
  Lemma~\ref{lem:largeShadows}, there is a $t\lesssim R+1$ such that
  if $x_0=r_{z}(t)$ and $\delta'$ is a face of $\delta$, then
  $\cSc_{x_{\delta'}}\subset \cSc_{x_0}$ and
  $x_0\in D_{x_{\delta'}}$.

  By part \ref{it:OmegaShadow} and Lemma~\ref{lem:connectedShadows},
  there is an $x'\in C_{x_0}$ such that
  $\Omega_\infty|_{\partial \delta}$ is contractible in $\cSr_{x'}$
  and $d(x_0,x')\lesssim 1$.  We let $x_\delta=x'$ and define the
  extension $\Omega_\infty|_{\delta}\from \delta\to \cSr_{x'}$ using
  Lemma~\ref{lem:connectedShadows}.  The desired properties of
  $x_\delta$ and $\Omega_\infty|_{\delta}$ are easy to check.
\end{proof}

\subsection{Lipschitz connectivity of $Z$}\label{sec:LipConnZ}

Finally, we show that $Z$ is Lipschitz $(k-2)$--connected.  Our main
tool is a lemma similar to Lemma~3.2 of \cite{Yo}.  
\begin{lemma}\label{lem:LipConn}
  Suppose that $Z\subset X$, that $X$ is Lipschitz $(k-2)$--connected
  and that for any $r$, there is a Lipschitz retraction
  $R_r\from N_r(Z)\to Z$, where $N_r(Z)=\{x\in X\mid d(x,Z)<r\}$.
  Then, if there is a map $\Omega\from \Delta_Z^{(k-1)}\to Z$ such
  that
  \begin{enumerate}
  \item for all $z\in Z$, $d(\Omega(\langle z\rangle),z)\lesssim 1$, and
  \item for any simplex $\delta\subset \Delta_Z$, we have
    $$\Lip \Omega|_{\delta}\lesssim \diam \V(\delta)+1$$
  \end{enumerate}
  then $Z$ is Lipschitz $(k-2)$--connected.  
\end{lemma}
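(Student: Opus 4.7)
The plan follows the pattern of the proof of Lemma~3.2 of \cite{Yo}: triangulate $D^{m+1}$ finely, assign each vertex a label in $Z$, and use $\Omega$ to turn these combinatorial labels into a Lipschitz filling.

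Let $\alpha\colon S^m \to Z$ be Lipschitz with $\Lip(\alpha) = L \geq 1$ and $m \leq k-2$; I want to produce $\beta\colon D^{m+1} \to Z$ with $\beta|_{S^m} = \alpha$ and $\Lip(\beta) \lesssim L$. First, I would fix a simplicial subdivision $\mathcal{T}$ of $D^{m+1}$ of mesh $\sim 1/L$ restricting to a triangulation of $S^m$ on which $\alpha$ is (close to) simplicial. Second, since $m \leq k-2$ and $X$ is Lipschitz $(k-2)$--connected, I can extend $\alpha$ to $\tilde\alpha\colon D^{m+1} \to X$ with $\Lip(\tilde\alpha) \lesssim L$; because $\alpha(S^m) \subset Z$ and $\tilde\alpha$ is $L$--Lipschitz from a domain of diameter $\sim 1$, the image sits in $N_R(Z)$ for some $R \sim L$.

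Third, I would define a label map $\ell\colon \mathcal{T}^{(0)} \to Z$ by $\ell(v) = \alpha(v)$ for boundary vertices and $\ell(v) = R_R(\tilde\alpha(v))$ for interior ones, extend it to a simplicial map $\bar\ell\colon \mathcal{T} \to \Delta_Z^{(m+1)} \subseteq \Delta_Z^{(k-1)}$ sending each simplex $\sigma$ linearly to the simplex of $\Delta_Z$ spanned by its vertex labels, and set $\beta_0 := \Omega \circ \bar\ell\colon D^{m+1} \to Z$. On a simplex $\sigma \in \mathcal{T}$ of diameter $\sim 1/L$, the map $\bar\ell|_\sigma$ is $\sim L$--Lipschitz and the diameter of $\V(\bar\ell(\sigma))$ in $Z$ is controlled by the worst distance between adjacent labels: $\lesssim 1$ for boundary--boundary pairs (by the fine mesh and $\Lip(\alpha) = L$) and $\lesssim \Lip(R_R)$ for interior pairs. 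The second hypothesis on $\Omega$ then gives $\Lip(\Omega|_{\bar\ell(\sigma)}) \lesssim \Lip(R_R) + 1$, whence $\Lip(\beta_0|_\sigma) \lesssim L(\Lip(R_R) + 1)$.

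Finally, I would bridge the boundary. By the first hypothesis on $\Omega$, $\beta_0|_{S^m}(v) = \Omega(\langle \alpha(v) \rangle)$ is within distance $\lesssim 1$ of $\alpha(v)$ at each boundary vertex. Carving off a thin collar $S^m \times [0, c/L] \subset D^{m+1}$ and applying $\Omega$ to the $1$--simplices $\langle \alpha(v), \Omega(\langle \alpha(v) \rangle) \rangle$ of $\Delta_Z$ (each of label diameter $\lesssim 1$, hence mapped by $\Omega$ to a path of length $\lesssim 1$ in $Z$) produces a Lipschitz homotopy from $\alpha$ to $\beta_0|_{S^m}$ across the collar; gluing it to $\beta_0$ on the interior disk yields $\beta$.

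The main obstacle is ensuring that the interior labels on each simplex of $\mathcal{T}$ have bounded mutual distance, which reduces to bounding $\Lip(R_R)$ independently of $R$. The bare phrase ``there is a Lipschitz retraction'' does not supply this uniformity, so in the horosphere application one would either arrange $\tilde\alpha$ to land inside the horoball $H$ (where the Busemann gradient flow provides a uniformly Lipschitz retraction onto $Z$) or invoke $R_r$ only at bounded scales via a multi--scale refinement.
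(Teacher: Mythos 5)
You have correctly identified the main obstruction in your own proposal, but the fix you gesture at is precisely the content of the paper's argument, so the gap is genuine and not cosmetic. Your uniform mesh at scale $1/L$ forces you to extend $\alpha$ over the whole disc using the Lipschitz connectivity of $X$, and the resulting extension $\tilde\alpha$ lands at distance up to $R\sim L$ from $Z$. Nothing in the hypothesis guarantees that $\Lip(R_R)$ is bounded independently of $R$, so your estimate $\Lip(\beta_0)\lesssim L\,(\Lip(R_L)+1)$ can grow strictly faster than $L$, and the lemma fails as stated by your argument. Observing that one could ``arrange $\tilde\alpha$ to land inside $H$'' is a retreat to the specific application, not a proof of the lemma, which makes no mention of horoballs.

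The paper's decomposition is genuinely different and is designed exactly to avoid ever invoking $R_r$ at unbounded scale. Instead of a uniform mesh, it uses a Whitney triangulation of the interior of $D^{k-1}(L)$ in which $\diam\sigma\sim d(\sigma,\partial D)$. On simplices of diameter at least $1$ (those far from the boundary), $\Omega$ is applied directly to the vertex labels $\alpha(h(v))$, where $h$ is nearest-point projection to $\partial D$: there the additive ``$+1$'' in the hypothesis $\Lip\Omega|_\delta\lesssim\diam\V(\delta)+1$ is swamped by $\diam\V(\delta)\sim\diam\sigma$, and the affine rescaling $\sigma\to\delta$ by a factor $1/\diam\sigma\le1$ yields a uniformly Lipschitz $\beta$ with no appeal to $R_r$ at all. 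Only on the small simplices near $\partial D$ does one use the Lipschitz $(k-2)$--connectivity of $X$, and there, since $\diam\sigma<1$ and $\beta|_{\partial\sigma}$ already has a uniformly bounded Lipschitz constant, the filling disc lies within a \emph{bounded} neighborhood $N_{r_d}(Z)$ with $r_d$ independent of $L$ and $\alpha$; thus only finitely many retractions $R_{r_0},\dots,R_{r_{k-2}}$ are ever used and their Lipschitz constants are honest constants. This dichotomy --- $\Omega$ on large simplices where the ``$+1$'' is harmless, $X$--connectivity plus a fixed-scale retraction on small simplices --- is exactly the ``multi-scale refinement'' you would need, and without it your proof does not close.

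A secondary issue: your boundary-collar bridge via $\Omega$ applied to the $1$--simplices $\langle\alpha(v),\Omega(\langle\alpha(v)\rangle)\rangle$ does not literally produce a path from $\alpha(v)$ to $\Omega(\langle\alpha(v)\rangle)$, since $\Omega$ evaluated at the second vertex gives $\Omega(\langle\Omega(\langle\alpha(v)\rangle)\rangle)$, not $\Omega(\langle\alpha(v)\rangle)$. The paper avoids this by never applying $\Omega$ near the boundary: the vertex map is defined piecewise as $\alpha(h(v))$ when $d(v,\partial D)<c^{-1}$ and as $\Omega(\langle\alpha(h(v))\rangle)$ otherwise, and the bounded jump across the seam is absorbed because those edges already have length bounded below.
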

\begin{proof}
  The proof is very similar to the proof of Lemma~3.2 of \cite{Yo},
  which constructs a Lipschitz extension using a Whitney
  decomposition.  The main difference is that we assume that
  $\Lip \Omega|_{\delta}\lesssim \diam \V(\delta)+1$ rather than
  $\Lip \Omega|_{\delta}\lesssim \diam \V(\delta)$.  The weaker
  inequality means that the Lipschitz constant of $\Omega$ on small
  simplices can be unbounded, so we use the Lipschitz connectivity of
  $X$ to extend the map near the boundary.

  Let $L>0$, let $D^{k-1}(L)\subset \R^{k-1}$ be the closed ball of
  radius $L$, and let $S^{k-2}(L):=\partial D^{k-1}(L)$.  It suffices to
  show that for any $L>0$ and any $1$--Lipschitz map
  $\alpha\from S^{k-2}(L) \to Z$, there is an extension
  $\beta\from D^{k-1}(L)\to Z$ such that $\Lip(\beta)\lesssim 1$.

  Let $\alpha$ be such a map.  By the Whitney covering lemma, we can
  decompose the interior of $D:=D^{k-1}(L)$ into a countable union of
  dyadic cubes with disjoint interiors such that
  $\diam C\sim d(C,\partial D)$ for each cube $C$.  The barycentric
  subdivision of this cover is a triangulation $T$ of the interior
  of $D$ such that each simplex is bilipschitz equivalent to a scaling
  of the standard simplex.  Let $c>1$ be such that for each simplex
  $\sigma$ with $\dim \sigma >0$, we have
  $$c^{-1} d(\sigma,\partial D)\le \diam \sigma\le c d(\sigma,\partial D).$$

  Define $h\from T^{(0)}\to \partial D$ so that for all
  $v\in T^{(0)}$, we have $d(v,h(v))=d(v,\partial D)$.  For each edge
  $e=(v,w)$, we have 
  $$d(h(v),h(w))\le d(v,\partial D)+d(v,w)+d(w,\partial D).$$
  Since $d(e,\partial D) \le c \diam e$, this implies that
  $d(h(v),h(w))\lesssim d(v,w)$ and thus $\Lip(h)\lesssim 1$.  

  Define
  \begin{equation*}
    \beta_0(v):=\begin{cases}
      \Omega(\langle \alpha(h(v))\rangle) & \text{ if $d(v,\partial D)\ge c^{-1}$ }\\
      \alpha(h(v)) & \text{ otherwise. }\\
    \end{cases}
  \end{equation*}
  If $e=(v,w)$ is an edge of $T$ such that $\ell(e)<c^{-2}/2$, then
  $d(v,\partial D)<c^{-1}$ and $d(w,\partial D)<c^{-1}$.  Then
  $$d(\beta_0(v),\beta_0(w))=d(\alpha(h(v)),\alpha(h(w)))\le
  \Lip(\alpha)\Lip(h)d(v,w)\lesssim d(v,w).$$
  If $e=(v,w)$ is an edge of $T$ such that $\ell(e)\ge c^{-2}/2$,
  then $\Omega$ may introduce bounded error:
  $$d(\beta_0(v),\beta_0(w))\lesssim d(\alpha(h(v)),\alpha(h(w)))+1\lesssim d(v,w).$$
  It follows that $\beta_0$ is a Lipschitz map with
  $\Lip(\beta_0)\sim 1$.

  If $\sigma=\langle v_0,\dots,v_k\rangle$ is a simplex of $T$ with
  diameter at least 1, then $d(v_i,\partial D)\ge c^{-1}$, so we may
  extend $\beta$ to $\sigma$ so that it sends $\sigma$ to
  $\Omega(\langle g_0(v_0),\dots,g_0(v_k)\rangle)$.  This extension is
  Lipschitz with bounded Lipschitz constant, and it remains to extend
  $\beta$ to simplices with diameter less than 1.

  We work one dimension at a time.  We have already defined $\beta$ on
  the 0--skeleton of $T$ in a Lipschitz fashion.  Suppose that
  $d\le k-2$, that $\beta$ has been defined on $T^{(d-1)}$, and that
  there is a $c_{d-1}>0$, independent of $\alpha$, such that
  $\Lip \beta|_{T^{(d-1)}}\le c_{d-1}$.  We claim that there is an
  extension of $\beta$ to $T^{(d)}$ and a $c_d$ independent of
  $\alpha$ such that $\Lip \beta|_{T^{(d)}}\le c_d$.

  Suppose that $\sigma$ is a $d$--simplex of $T$ such that
  $\diam \sigma<1$.  Then
  $\beta|_{\partial \sigma}\from \partial \sigma \to Z$ is a map with
  $\Lip \beta|_{\partial \sigma}\le c_{d-1}$.  Since $\sigma$ is
  bilipschitz equivalent to a scaling of the standard simplex, we can
  identify $\partial \sigma$ with a scaled version of $S^{d-1}$ in a
  bilipschitz way and use the Lipschitz connectivity of $X$ to produce
  an extension $f\from \sigma\to X$ with Lipschitz constant
  $\Lip f\lesssim c_{d-1}$.  Then there is a $r_{d}$ such that
  $f(\sigma)\subset N_{r_{d}}(Z)$, and if
  $\beta|_\sigma=R_{r_{d}}\circ f$, then
  $$\Lip \beta|_\sigma\lesssim c_{d-1}\Lip R_{r_{d}},$$ 
  as desired.  Repeating this process for each small simplex in
  $T^{(d)}$, we obtain the desired extension.
\end{proof}

We use $\Omega_\infty$ and the map $i_u$ constructed in
Lemma~\ref{lem:lipschitzPushing} to construct an $\Omega$ that
satisfies Lemma~\ref{lem:LipConn}.  

\begin{lem}\label{lem:Omega}
  There is a map $\Omega\from \Delta_Z^{(k-1)}\to Z$ satisfying the
  conditions of Lemma~\ref{lem:LipConn}.  Consequently, $Z$ is
  Lipschitz $(k-2)$--connected.
\end{lem}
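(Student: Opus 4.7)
The plan is to build $\Omega$ by combining $\Omega_\infty$ from Lemma~\ref{lem:OmegaInfty} with the projections $i_{x_\delta}$ from Lemma~\ref{lem:lipschitzPushing}, then gluing the pieces. For each simplex $\delta\subset\Delta_Z^{(k-1)}$ I would define a preliminary map $\tilde\Omega_\delta\from\delta\to Z$ by $\tilde\Omega_\delta(t):=i_{x_\delta}(\Omega_\infty(t))$; this is well-defined because $\Omega_\infty(\delta)\subset\cSr_{x_\delta}$ by property~\ref{it:OmegaShadow}. Combining the bound $\Lip(i_{x_\delta}|_{\cSr_{x_\delta}})\lesssim h(x_\delta)$ from Lemma~\ref{lem:lipschitzPushing}, the estimate $h(x_\delta)\lesssim\diam\V(\delta)+1$ from property~\ref{it:xDeltaDists}, and the global Lipschitz constant of $\Omega_\infty$, one obtains $\Lip(\tilde\Omega_\delta)\lesssim\diam\V(\delta)+1$. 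In particular $\tilde\Omega_{\langle z\rangle}(\langle z\rangle)$ lies within $O(1)$ of $z$, which is the mechanism behind condition~(1).

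The various $\tilde\Omega_\delta$ need not agree on faces, but estimate~\eqref{eq:twoVariablePushing} controls the mismatch. If $\delta'\subset\delta$ and $t\in\delta'$, then property~\ref{it:OmegaOrdering} gives $x_\delta\in D_{x_{\delta'}}$, so Lemma~\ref{lem:DxShadows} yields $\cSr_{x_{\delta'}}\subset\cSr_{x_\delta}(\rho)$; hence $\sigma:=\Omega_\infty(t)$ lies in the intersection $\cSr_{x_\delta}(\rho)\cap\cSr_{x_{\delta'}}(\rho)$. Applying \eqref{eq:twoVariablePushing} with $\sigma_1=\sigma_2=\sigma$ produces
\begin{equation*}
d\bigl(\tilde\Omega_\delta(t),\tilde\Omega_{\delta'}(t)\bigr)\lesssim d(x_\delta,x_{\delta'})\lesssim \diam\V(\delta)+1,
\end{equation*}
where the second inequality again uses property~\ref{it:xDeltaDists} together with $\V(\delta')\subset\V(\delta)$.

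To assemble a single continuous $\Omega$ I would proceed inductively on skeleta: set $\Omega(\langle z\rangle):=z$, and suppose $\Omega$ has been defined on $\Delta_Z^{(d-1)}$ with $\Lip(\Omega|_{\delta'})\lesssim\diam\V(\delta')+1$. For each $d$-simplex $\delta$ I fix a universal $\epsilon>0$, let $\delta^\ast:=(1-\epsilon)\delta$ be an inner parallel copy, and set $\Omega:=\tilde\Omega_\delta$ on $\delta^\ast$. On the collar $\delta\smallsetminus\delta^\ast$ I would interpolate between $\Omega|_{\partial\delta}$ (from the induction) and $\tilde\Omega_\delta|_{\partial\delta^\ast}$; by the mismatch bound these differ pointwise by $O(\diam\V(\delta)+1)$, so joining them through CAT(0) geodesics in $X$ keeps the path inside $N_r(Z)$ for $r\sim\diam\V(\delta)+1$, after which the retraction $R_r\from N_r(Z)\to Z$ postulated in Lemma~\ref{lem:LipConn} pulls the interpolation back into $Z$. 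Since $\epsilon$ is universal the collar contributes Lipschitz speed $O(\diam\V(\delta)+1)$, so the inductive estimate $\Lip(\Omega|_\delta)\lesssim\diam\V(\delta)+1$ is preserved.

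The hard part will be the bookkeeping in the collar: one must verify that $R_r$, realized for example as the geodesic-flow retraction along the Busemann gradient, is uniformly Lipschitz at the scale $r\sim\diam\V(\delta)+1$, and that this uniformity is stable as the induction climbs the skeleta so that the implicit constants do not compound. Once the construction is carried through, conditions~(1) and~(2) of Lemma~\ref{lem:LipConn} hold by design, and that lemma immediately delivers the Lipschitz $(k-2)$-connectivity of $Z$, completing the proof of Theorem~B.
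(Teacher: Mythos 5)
Your setup matches the paper's: both proofs combine $\Omega_\infty$ with the projections $i_{x_\delta}$, and both use the two-variable estimate~\eqref{eq:twoVariablePushing} together with Lemma~\ref{lem:DxShadows} and property~\ref{it:OmegaOrdering} to control the mismatch $d(\tilde\Omega_\delta(t),\tilde\Omega_{\delta'}(t))\lesssim\diam\V(\delta)+1$ between the maps attached to nested simplices. The divergence is in how the mismatch is resolved, and your resolution has a genuine gap.

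You propose to glue across collars by pushing CAT(0) geodesics in $X$ back into $Z$ via the retraction $R_r\from N_r(Z)\to Z$, with $r\sim\diam\V(\delta)+1$, and you flag this as ``bookkeeping.'' It is not bookkeeping; it is the obstruction. Any interpolating geodesic between two points of $Z$ that sinks to depth $h\sim\diam\V(\delta)$ inside the horoball is, after retraction, stretched by a factor on the order of $e^{h}$ --- this is the exponential spreading of horospheres, visible already in $\Hyp^2$ where pushing $(x,y)\mapsto(x,1)$ from level $y$ multiplies horizontal lengths by $y=e^h$. So $\Lip R_r$ grows exponentially in $r$, and your collar gives $\Lip(\Omega|_\delta)$ on the order of $e^{\diam\V(\delta)}$, not $\diam\V(\delta)+1$; condition~(2) of Lemma~\ref{lem:LipConn} fails. (The retraction $R_r$ really is available in the proof of Lemma~\ref{lem:LipConn}, but only at a fixed scale $r=r_d$, for simplices of bounded diameter.)

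The paper sidesteps the retraction entirely. Rather than interpolating image points in $X$ and then projecting to $Z$, it interpolates the \emph{basepoint} of the projection: it builds a map $W\from\Delta_Z^{(k-1)}\to Y(\rho)$, $W=(\Omega_\infty\circ p_1,\,F\circ p_2)$, using the exploded-simplex subdivision so that the second coordinate $F\circ p_2$ slides continuously between the various $x_\delta$ while staying inside the convex region $D_{x_{\delta_0}}$, and the first coordinate stays in a $\rho$--shadow of the second for a \emph{fixed} $\rho$. Then $\Omega=I\circ W$ lands in $Z$ directly, and~\eqref{eq:twoVariablePushing} gives the Lipschitz bound without any retraction and without exponential loss. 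That is the idea your proposal is missing: control the family $\{i_x\}_x$ as a two-variable Lipschitz map rather than fixing one $i_{x_\delta}$ per simplex and repairing the seams afterward.
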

\begin{proof}
  For $\rho>0$, let
  $$Y(\rho):=\{(\sigma,x)\in \xop(\fc)\times X \mid h(x)\ge 1,\sigma \in
  \cSr_{x}(\rho)\}.$$
  We give $Y(\rho)$ the metric
  $$d_Y((\sigma_1,x_1),(\sigma_2,x_2)):=d(x_1,x_2)+\min\{h(x_1),h(x_2)\}\cdot \angle(\sigma_1,\sigma_2).$$
  In Lemma~\ref{lem:lipschitzPushing}, we showed that the map
  $I\from Y(\rho) \to X$ given by $I(v,x)=i_x(v)$ is Lipschitz with
  Lipschitz constant depending on $\rho$.  The map $\Omega$ will be a
  composition $\Omega=I\circ W$, where
  $W\from \Delta_Z^{(k-1)}\to Y(\rho)$ is a map based on the map
  $\Omega_\infty$ and the points $x_\delta$ constructed in
  Lemma~\ref{lem:OmegaInfty}.

  \begin{figure}
    \begin{center}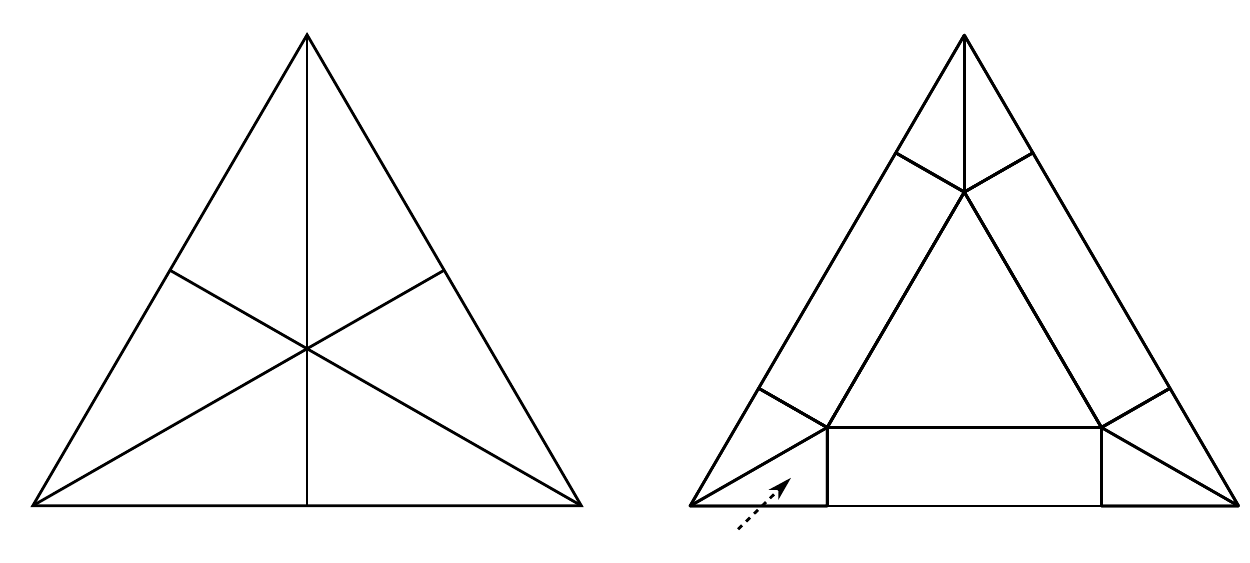
    \end{center}
    \caption{\label{fig:triangles} Each cell of the ``exploded
      simplex'' $E(\delta)$ is a product of a cell of $\delta$ and a
      cell of $B(\delta)$.}
  \end{figure} 

  To construct $W$, we use the exploded simplices used in \cite{Yo}.
  If $\Delta$ is a simplex, the exploded simplex $E(\Delta)$ is a
  cellular subdivision of a simplex $\Delta$ with the following
  properties (see Figure~\ref{fig:triangles}):
  \begin{enumerate}
  \item $E(\Delta)$ contains a copy $\Delta'$ of $\Delta$ at its
    center.
  \item $E(\Delta)$ subdivides each face of $\Delta$ into an exploded
    simplex of lower dimension.
  \item Each cell of $E(\Delta)$ is of the form
    $\Delta_1\times \Delta_2$, where $\Delta_1$ is a face of $\Delta$
    and $\Delta_2$ is a face of the barycentric subdivision
    $B(\Delta)$.

    Specifically, recall that the vertex set of $B(\Delta)$ is the set
    of faces of $\Delta$.  If $\delta_0,\dots, \delta_k$ are faces of
    $\Delta$ that form a flag---that is, if
    $\delta_0\subset \dots\subset \delta_k$---then
    $\langle \delta_0,\dots,\delta_k\rangle$ is a simplex of $B(\Delta)$.
    Each cell of $E(\Delta)$ is of the form
    $$\delta \times \langle \delta_0,\dots,\delta_k\rangle$$
    for some flag $\delta_0,\dots,\delta_k$ and some face $\delta$ of
    $\delta_0$.  
  \item Since each cell of $E(\Delta)$ is a cell of
    $\Delta\times B(\Delta)$, we can define maps
    $p_1\from E(\Delta)\to \Delta$ and
    $p_2\from E(\Delta)\to B(\Delta)$ ($\rho_1$ and $\rho_2$ in
    \cite{Yo}) coming from the projections to the first and second
    factors.  These maps are Lipschitz.  The map $p_1$ expands the
    central simplex to cover $\Delta$ and shrinks the collar to the
    boundary.  The map $p_2$ collapses the central simplex to the
    barycenter of $\Delta$, sends the central simplices of all the
    faces to the corresponding barycenters, and sends the collar
    surjectively to $\Delta$.
  \end{enumerate}

  If $Q$ is a simplicial complex, we can form a cellular subdivision
  $E(Q)$ by exploding each simplex.  The maps $p_1$ and $p_2$ on
  each simplex agree on overlaps, so we combine them to form maps
  $p_1\from E(Q)\to Q$ and $p_2\from E(Q)\to B(Q)$ defined on
  all of $E(Q)$.

  By convention, we will write the vertices of a simplex of $B(Q)$ in
  ascending order, so if $\langle \delta_0,\dots, \delta_d\rangle$ is
  a simplex of $B(Q)$, then the $\delta_i$'s are simplices of $Q$ and
  $\delta_0\subset \dots \subset \delta_d$.

  Let $Q=\Delta_Z^{(k-1)}$.  For each simplex $\delta$ of $\Omega$, we
  define $\Omega$ on $\delta'$ by 
  \begin{equation}\label{eq:OmegaCenters}
    \Omega(\delta')=i_{x_\delta}(\Omega_\infty(\delta)),
  \end{equation}
  where $\delta'$ is the central simplex of $E(\delta)$.  We will extend
  $\Omega$ to the collars of the $E(\delta)$'s by using the
  projections $p_1$ and $p_2$. 

  Specifically, we will define a map $F\from B(Q)\to X$ and
  let $\Omega=I\circ W$, where
  $$W=(\Omega_\infty \circ p_1, F \circ p_2).$$
  The map $F$ will satisfy:
  \begin{enumerate}
  \item\label{it:centersToXd}
    The complex $B(Q)$ has a vertex $b_\delta$ at the barycenter
    of each simplex $\delta$ of $Q$.  For all $\delta$,
    $F(b_\delta)=x_\delta$.  (This ensures that the extension agrees
    with the map defined in equation \eqref{eq:OmegaCenters}.)
  \item If $\Delta=\langle \delta_0,\dots,\delta_d\rangle\subset
    B(Q)$, then $\Lip F|_\Delta\lesssim \diam \V(\delta_d)+1$.  
  \item There is a $\rho>0$ such that if
    $$y\in \langle \delta_0,\dots,\delta_d\rangle\subset B(Q),$$
    then $\cSc_{F(y)}(\rho)\supset \cSc_{x_{\delta_0}}$.
  \end{enumerate}

  We define $F$ one dimension at a time.  For each vertex $v$ of $Q$,
  we define $F(v)=x_v$.  If $\delta$ is a simplex of $Q$ and we have
  already defined $F$ on $\partial \delta$, we extend $F$ on the rest
  of $\delta$ by coning it off to $x_\delta$.  That is, every point in
  $\delta$ is on a line segment between $b_\delta$ and a point
  $y\in \partial \delta$.  We send $b_\delta$ to the point $x_\delta$
  and we send each such segment to a geodesic segment from $x_\delta$
  to $F(y)$.  

  This satisfies condition 1 by construction.  Condition 2 follows
  from the fact that $X$ is CAT(0) and
  Lemma~\ref{lem:OmegaInfty}.\ref{it:xDeltaDists}.  It only remains to
  check condition 3.  Suppose that
  $\Delta=\langle \delta_0,\dots,\delta_d\rangle\subset B(Q)$ and that
  $y\in \Delta$.  For $i=0,\dots,d$, let $x_i=x_{\delta_i}$.  Let
  $D_0=D_{x_0}$ be a neighborhood of $C_{x_0}$ as in
  Section~\ref{sec:shadows}.  By
  Lemma~\ref{lem:OmegaInfty}.\ref{it:OmegaOrdering}, for all
  $i=0,\dots, d$, we have $x_i\in D_0$.  Since $C_{x_0}$ is convex,
  $D_0$ is convex.  Since $F(\Delta)$ is contained in the convex hull
  of the $x_i$, we have $F(y)\in D_0$.  By Lemma~\ref{lem:DxShadows},
  there is a $\rho$ depending on $X$ such that
  $\cSc_{x}\subset \cSc_{F(y)}(\rho)$, as desired.

  It follows that the image of $W$ lies in $Y(\rho)$.  Suppose that
  $q\in Q$ and let
  $\delta \times \langle \delta_0,\dots,\delta_k\rangle$ be a cell of
  $E(Q)$ containing $q$.  Here, $\delta$ is a face of $Q$ and
  $\delta\subset \delta_0\subset \dots\subset \delta_k.$
  Then $p_1(q)\in \delta$ and
  $p_2(q)\in \langle \delta_0,\dots,\delta_k\rangle$.  By
  Lemma~\ref{lem:OmegaInfty}.\ref{it:OmegaShadow}, we have
  $$\Omega_\infty(p_1(q))\in \cSr_{x_\delta}\subset
  \cSr_{x_{\delta_0}}.$$
  By property 3 of $F$, we have $\cSr_{x_{\delta_0}}\subset
  \cSr_{F(p_2(q))}(\rho)$, so
  $$W(q)=(\Omega_\infty(p_1(q)),F(p_2(q)))\in Y(\rho).$$
  We may thus define $\Omega=I\circ W$.

  Finally, we claim that $\Omega$
  satisfies the conditions in Lemma~\ref{lem:LipConn}.  First, if
  $z\in Z$, let $v=\langle z \rangle$ be the corresponding vertex of
  $\Delta_Z$.  Then
  $$\Omega(v)= I(\Omega_\infty(v),F(v))=i_{x_z}(\sigma),$$
  where $\sigma=\Omega_\infty(v)\in \cSr_{x_z}(\rho)$, and
  \begin{align*}
    d(z,\Omega(v))&\le d(z,x_z)+d(x_z,i_{x_z}(\sigma))\\ 
                  &\lesssim1+h(x_z)+\rho \lesssim 1
  \end{align*}
  by Lemmas~\ref{lem:lipschitzPushing} and \ref{lem:OmegaInfty}.  This
  proves property 1 of Lemma~\ref{lem:LipConn}.

  If $\delta\subset \Delta_Z$ and if $q_1,q_2\in \delta$, properties 1
  and 2 of $F$ imply that 
  $$h(F(p_2(q_i)))\lesssim \diam \V(\delta)+1.$$
  It follows that
  \begin{align*}
    d(\Omega(q_1),\Omega(q_2))
    &\le (\Lip I)d_Y(W(q_1),W(q_2))\\ 
    &\lesssim d(F(p_2(q_1)),F(p_2(q_2)))+(\diam\V(\delta)+1) \cdot
      \angle\bigl(\Omega_\infty(p_1(q_1)),\Omega_\infty(p_1(q_2))\bigr)\\
    & \lesssim (\diam\V(\delta)+1)(d(q_1,q_2),
  \end{align*}
  implying property 2.
\end{proof}

Lemma~\ref{lem:LipConn} then implies that $Z$ is Lipschitz
$(k-2)$--connected.  This concludes the proof.

\section{Proof of Theorem A}

We use the following result proved in \cite{Le1}, Thm. 3.6 (see also
\cite{Bo}, \S 13).

\begin{prop} 
  Let $\Gamma$ be an arithmetic lattice of $\mathbb Q$--rank $1$ in a
  linear, semisimple Lie group $G$ and let $X=G/K$ be the associated symmetric
  space. Then any orbit of $\Gamma$ in $X$ is quasi-isometric to a set
  $Y:=X\setminus \bigcup_i B_i$, where the $B_i$ comprise a countable
  set of disjoint horoballs.
\end{prop}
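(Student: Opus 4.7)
The plan is to invoke $\mathbb{Q}$--rank $1$ reduction theory (Borel, Harish--Chandra, Godement) to produce a $\Gamma$--invariant complement of disjoint horoballs, and then apply the \v Svarc--Milnor lemma.

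First I would identify the relevant cusps. Since the $\mathbb{Q}$--rank of $G$ is one, every proper $\mathbb{Q}$--parabolic subgroup of $G$ is minimal, and a theorem of Borel says there are only finitely many $\Gamma$--conjugacy classes of them; pick representatives $P_1,\dots,P_m$. Each $P_j$ is the stabilizer of a unique point $\tau_j\in X_\infty$, which will serve as the center of a horoball. The $\Gamma$--orbit $\Gamma\cdot\{\tau_1,\dots,\tau_m\}\subset X_\infty$ gives a countable (indeed $\Gamma$--equivariant) collection of prospective horoball centers, indexed say by $i$, with each $\tau_i$ stabilized by some conjugate of a $P_j$.

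Second, for each $j$ I would pick a horoball $B_j$ centered at $\tau_j$ and let $\{B_i\}$ be the $\Gamma$--translates of the $B_j$. The key claim is that for horoballs taken sufficiently ``deep'' (equivalently, with Busemann threshold large enough), the family $\{B_i\}$ is pairwise disjoint. This is exactly where the hypothesis $\mathbb{Q}\text{--rank}=1$ is used: Siegel set theory furnishes a fundamental domain for $\Gamma$ on $X$ that is the union of a compact set together with finitely many cusp pieces, one for each $P_j$, and each cusp piece is contained in a horoball centered at $\tau_j$. Because all proper $\mathbb{Q}$--parabolics are minimal and the centers $\tau_i$ are accordingly all ``of the same type,'' distinct $\tau_i\ne\tau_{i'}$ carry horoballs that, after shrinking uniformly, become disjoint; this would fail in higher $\mathbb{Q}$--rank where cusps of different codimensions nest.

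Third, let $Y:=X\setminus\bigcup_i B_i$. Then $Y$ is closed and, by construction, $\Gamma$--invariant. The quotient $\Gamma\backslash Y$ is compact: by reduction theory $\Gamma\backslash X$ has finite volume and the complement of the chosen cusp neighborhoods in $\Gamma\backslash X$ is compact, and $\Gamma\backslash Y$ is precisely such a complement. Thus $\Gamma$ acts properly, by isometries, and cocompactly on $Y$ equipped with the induced length metric. The \v Svarc--Milnor lemma then gives a $\Gamma$--equivariant quasi-isometry between $\Gamma$ (with a word metric) and $Y$. Since any orbit $\Gamma\cdot x\subset X$ carries the induced restricted metric and the orbit map $\Gamma\to\Gamma\cdot x$ is a quasi-isometry (the stabilizers are finite and $\Gamma$ is discrete), we conclude $\Gamma\cdot x$ is quasi-isometric to $Y$, as asserted.

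The main obstacle, and the only subtle point, is establishing the disjointness of the horoballs $B_i$. One must show that the Siegel cusps attached to non-conjugate $P_j$'s are bounded apart and that Siegel cusps attached to $\gamma P_j\gamma^{-1}\ne P_j$ likewise stay apart, both uniformly. This rests on the precise Siegel finiteness statement in $\mathbb{Q}$--rank $1$, after which choosing the horoballs deep enough in each cusp suffices; the rest of the argument is essentially formal.
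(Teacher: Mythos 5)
The paper does not prove this proposition; it cites Leuzinger (Thm.\ 3.6) and Borel (\S 13), and your reduction-theory outline---finitely many $\Gamma$-conjugacy classes of proper $\mathbb{Q}$-parabolics, deep-enough horoballs at the corresponding cusps made pairwise disjoint, $\Gamma$ acting properly and cocompactly on the complement $Y$, then \v Svarc--Milnor---is exactly the standard argument underlying the citation. You also correctly isolate the disjointness of horoballs as the one place where the $\mathbb{Q}$-rank-one hypothesis is essential.

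Two points to tighten. First, a minimal $\mathbb{Q}$-parabolic $P_j$ determines a point $\tau_j\in X_\infty$ (via the ray in the direction of its rational split torus), but it is not the full $G$-stabilizer of $\tau_j$: that stabilizer is a real parabolic, which is strictly larger unless $\mathbb{Q}$-rank equals $\mathbb{R}$-rank. More substantively, the last sentence of your proposal is false as written: the orbit map $\Gamma\to\Gamma\cdot x$, with $\Gamma\cdot x$ carrying the metric restricted from $X$, is in general \emph{not} a quasi-isometry. Finite stabilizers and discreteness give properness of the orbit map, not coboundedness of the orbit in $X$; the classical counterexample is $\SL(2,\mathbb{Z})$ acting on $\Hyp^2$, where the orbit is exponentially distorted, and the distortion of lattice orbits in $X$ is precisely the subject of the rest of the paper. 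What \v Svarc--Milnor actually gives you is a quasi-isometry between $(\Gamma,d_\Gamma)$ and $Y$ equipped with its inner (length) metric; equivalently, the orbit $\Gamma\cdot x$ should be understood with the metric restricted from $Y$, or one should work with the thickened orbit $X(r)$ with its inner metric, which is the reading consistent with the paper's surrounding discussion. With that correction, the argument is complete and aligns with the cited source.
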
 

We can write $X$ as a Riemannian product of irreducible symmetric
spaces, $X=\prod_{i=1}^mX_i$ (corresponding to the decomposition
$G=\prod_{i=1}^mG_i$ of $G$ into simple factors). The boundary  of $X$ at infinity is the spherical join of the boundaries of the factors.

Assume now (as in Theorem A) that the lattice $\Gamma$ is
\emph{irreducible}.  We claim that none of the centers of the
horoballs in the above proposition are contained in a proper join factor of
$X_\infty$.
By way of contradiction, assume that $m\geq 2$ and that one of the horoballs, say $B$, is
centered in a join factor associated to, say, $X_1$.  Then $B$ is a
sublevel set of the Busemann function associated to a geodesic in $X$
of the form $(c_1(t), p_2,\ldots,p_m)$.  Thus $B$ has the form
$B=B_1\times \prod_{i=2}^mX_i$, where $B_1$ is a horoball in
$X_1$. The projection of a $\Gamma$--orbit on the first factor $X_1$
then avoids $B_1$. In particular, the projection of $\Gamma$ to the
first factor $G_1\cong G/\prod_{i=2}^mG_i$ cannot be dense in $G_1$.  This contradicts
irreducibility, see \cite{Ra}, Cor. 5.21 (5).

By Theorem~B, the boundary of each horoball $B_i$ is Lipschitz
$(k-2)$--connected, where $k=\rank X$.  By part 2 of
Proposition~\ref{prop:LipUndistorted}, the  subspace $Y\subset $ is undistorted up to
dimension $k-1$.  The claim on its isoperimetric inequalities is a
consequence of \cite{Le2}, which asserts that a symmetric space $X$
satisfies Euclidean isopermetric inequalites below the rank.

Finally, the lower bound in Theorem~A follows from
Proposition~\ref{prop:lowerBoundsHorospheres}.  By the proposition,
for each $i$ and for all sufficiently large $r$, there is a Lipschitz
sphere $\alpha\from S^{k-1}\to \partial B_i$ such that
$\Lip(\alpha)\sim r$ and such that any Lipschitz extension
$\beta\from D^{k}\to \partial B_i$ satisfies $\vol \beta\ge e^{cr}.$

Let $p\from Y\to \partial B_i$ be the nearest-point projection; since
$B_i$ is convex, this is a distance-decreasing map.  If
$\beta'\from D^{k}\to Y$ is an extension of $\alpha$, then $p\circ
\beta'\from D^k\to \partial B_i$ is also an extension, and
$$\vol \beta\ge \vol \beta'\ge e^{cr}.$$

\noindent{\tt enrico.leuzinger@kit.edu}

\noindent\textsc{Institute of Algebra and Geometry\\
 Karlsruhe Institute of technology (KIT), 76131 Karlsruhe, Germany}

\vspace{1ex}

\noindent{\tt ryoung@cims.nyu.edu}

\noindent\textsc{Courant Institute of mathematical sciences\\New York University, NY 10012, USA}
\end{document}